\@date\else {\vskip3ex \centering\footnotesize\@date\par\vskip1ex}\fi
\else \@footnotetext{\@setdate}\fi}
\newcommand{\IC}{{\mathbb C}}
\newcommand{\IN}{{\mathbb N}}
\newcommand{\IZ}{{\mathbb Z}}
\newcommand{\IR}{{\mathbb R}}
\newcommand{\IF}{{\mathbb F}}
\newcommand{\cH}{{\mathcal H}}
\newcommand{\cG}{{\mathcal G}}
\newcommand{\fm}{{\mathfrak m}}
\newcommand{\fn}{{\mathfrak n}}
\newcommand{\Ga}{\Gamma}
\newcommand{\Cs}{C$^\ast$}
\newcommand{\rg}{\mathop{{\mathrm C}_{\mathrm r}^\ast}}
\newcommand{\norm}[1]{\left\lVert#1\right\rVert}
\newtheorem{thm}{Theorem}[section]
\newtheorem{prop}[thm]{Proposition}
\newtheorem{lem}[thm]{Lemma}
\newtheorem*{lem*}{Lemma}
\theoremstyle{definition}
\newtheorem{defn}[thm]{Definition}
\newtheorem{exa}[thm]{Example}
\newtheorem{rem}[thm]{Remark}
\title[]{C$^*$-simplicity of relative profinite completions of generalized Baumslag-Solitar groups}
\begin{document}
\author{Miho Mukohara}

\address{the University of Tokyo}

\date{\today}

\begin{abstract}
Suzuki recently gave constructions of non-discrete examples of
locally compact C*-simple groups and Raum showed \Cs-simplicity of the relative profinite completions of the Baumslag-Solitar groups by using Suzuki's results.
We extend this result to some fundamental groups of graphs of groups called generalized Baumslag-Solitar groups.
In this article,
we focus on some sufficient condition to show that these locally compact groups are C$^*$-simple and that KMS-weights of these reduced group C$^*$-algebras are unique.
This condition is an analogue of the Powers averaging property of discrete groups and holds for several currently known constructions of non-discrete C$^*$-simple groups.
\end{abstract}
\maketitle
\section{Introduction}
Group \Cs-algebras and group von Neumann algebras have long been studied.
The group algebra is one of the most fundamental way to construct operator algebras.
The construction of a mutually non-isomorphic family of ${\rm I\hspace{-.01em}I}_1$-factors by McDuff  is an important result in the history.
In addition,
many properties of groups such as amenability or exactness are well-studied in the field of operator algebras.
They are strongly related to group operator algebras and these studies lead to development of important theories including the Baum-Connes theory and classification theories for operator algebras.
It is a natural question whether a given group \Cs-algebra is simple or not.
The reduced group \Cs-algebra is a \Cs-algebra generated by the left regular representation $\lambda$ of a locally compact group.
A locally compact group $G$ is called \Cs-simple,
if its reduced group \Cs-algebra $\rg(G)$ is a simple \Cs-algebra.
\Cs-simplicity of discrete groups has been studied since Powers proved that the non-commutative free group $\IF_2$ is \Cs-simple \cite{Po},
and as of today, satisfactory characterizations related to boundary actions and the unique trace property of reduced group \Cs-algebras have been found for discrete groups \cite{K-K}, \cite{Haa}.

On the other hand,
\Cs-simplicity of non-discrete locally compact groups is not well understood compared to that of discrete groups. 
The existence of a non-discrete example of \Cs-simple groups was asked by de la Harpe in \cite{Har1},
and Suzuki constructed the first example of it in \cite{S1}.
As of today,
some examples were found in \cite{S1}, \cite{S2}, and \cite{R2}.
By using the construction in \cite{S1}, Raum \cite{R2} showed the relative profinite completion of the Baumslag-Solitar group BS$(n,m)$ is \Cs-simple,
if $|n|,|m|\geq 2$.
Our main result is a generalization of that.
The Baumslag-Solitar group BS$(n,m)$ is generated by two elements $a,t$ with a relation 
$ta^{m}t^{-1}=a^{n}$,
and it has a natural  action on a tree whose vertex stabilizers and edge stabilizers are isomorphic to $\IZ$.
Generally,
groups with such actions on trees are called generalized Baumslag-Solitar groups.
In this article,
we show \Cs-simplicity of the closures of generalized Baumslag-Solitar groups in the topology of the automorphism groups on trees.
Because every locally compact \Cs-simple group is totally disconnected \cite{R1},
it is natural to consider C$^*$-simplicity for closed subgroups of automorphism groups on trees.

For discrete groups,
Haagerup \cite{Haa} and Kennedy \cite{Ke} proved that a group $\Ga$ is \Cs-simple,
if and only if it has the Powers averaging property,
that is,
for any element $a$ in the reduced group \Cs-algebra $\rg(\Ga)$ with the canonical trace $\tau$,
$\tau(a)$ is approximated by elements in the convex hull of the set
$\{\lambda_{s}x{\lambda_s}^* \in \rg(\Ga) \mid s\in \Ga\}$.
In the proof of our main theorem,
we focus on the canonical conditional expectation $E_K$ from the reduced group \Cs-algebra $\rg(G)$ of totally disconnected group $G$ onto the \Cs-subalgebra $\rg(K)$ and the averaging projection $p_K$ corresponding to a compact open subgroup $K$ of $G$.
The following condition can be confirmed in a direct way.

\begin{lem*}[Lemma \ref{lem:ce1}]
Let $(K_\nu)_{\nu\in N}$ be a decreasing net of compact open subsets of $G$ and $(A_\nu)_{\nu\in N}$ be an increasing net $(A_\nu)_{\nu\in N}$ of \Cs-subalgebras of $\rg(G)$ with $\bigcap_{\nu}K_{\nu}=\{e\}$ and $\overline{\bigcup_{\nu}A_\nu}=\rg(G)$.
If every conditional expectation $E_{\nu}:=E_{K_\nu}$ and averaging projection $p_{\nu}:=p_{K_\nu}$
satisfy the following conditions,
then $G$ is \Cs-simple.
 \begin{itemize}
\item[(1)]$ p_{\nu}\in A_\nu$.  
\item[(2)]  
          For any $\epsilon>0$ and self adjoint element $x\in A_\nu$,
          there are $g_{1}, g_{2},\dots, g_{n}\in G$ such that
          \[
             \norm{\frac{1}{n}\sum_{i=1}^{n}\lambda_{g_i}(x-E_{\nu}(x))\lambda_{g_i}^*}<\epsilon,
             \quad 
             \norm{p_{\nu}-\frac{1}{n}\sum_{i=1}^{n}\lambda_{g_i}p_{\nu}\lambda_{g_i}^{*}}<\epsilon.
          \]   
 \end{itemize}
Moreover,
if $C_{cc}(G)\subset{\bigcup_{\nu}A_\nu}$,
and each $g_{1}, g_{2},\dots, g_{n}$ can be taken in the kernel of the modular function on $G$,
then the Plancherel weight is a unique $\sigma^{\varphi}$-KMS-weight on $\rg(G)$ up to scalar multiple.
\end{lem*}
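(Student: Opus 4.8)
The plan is to treat the two assertions separately, in both cases reducing everything to the unital corners $p_\nu\rg(G)p_\nu$, on which the Plancherel weight $\varphi$ restricts to a \emph{finite} faithful trace (it is finite there because $\varphi(p_\nu)=\mu(K_\nu)^{-1}<\infty$). First I record the facts that drive both arguments. Since the $K_\nu$ are compact open and decrease to $\{e\}$, the projections $p_\nu=\mu(K_\nu)^{-1}\mathbf 1_{K_\nu}$ form an increasing approximate unit of $\rg(G)$ converging strongly to $1$. Because $p_\nu$ is the projection onto the trivial representation of $K_\nu$, one has $p_\nu\rg(K_\nu)p_\nu=\IC p_\nu$, so for any self-adjoint $x=p_\nu xp_\nu$ the conditional expectation takes the explicit form $E_\nu(x)=c\,p_\nu$ with $c=\mu(K_\nu)\varphi(x)$; here I use $\varphi\circ E_\nu=\varphi$ together with $\varphi(p_\nu)=\mu(K_\nu)^{-1}$ to identify the scalar. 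Condition (1), $p_\nu\in A_\nu$, lets me compress approximants into $p_\nu A_\nu p_\nu\subseteq A_\nu$, so such $x$ are available inside $A_\nu$.

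For \Cs-simplicity, let $I\subseteq\rg(G)$ be a nonzero closed two-sided ideal; it suffices to produce $p_\nu\in I$ for all sufficiently large $\nu$, since these form a cofinal subnet of the approximate unit and an ideal containing an approximate unit is everything. Choose $0\ne a=a^*\ge 0$ in $I$. As $p_\nu ap_\nu\to a$ in norm, every large $\nu$ has $\tilde a:=p_\nu ap_\nu\ne 0$, whence $\varphi(\tilde a)>0$ by faithfulness on the corner. Approximating $\tilde a$ by a self-adjoint $x=p_\nu xp_\nu\in A_\nu$ and writing $\Phi(y):=\frac1n\sum_i\lambda_{g_i}y\lambda_{g_i}^*$ for the averaging furnished by condition (2), I split $\Phi(x)=\Phi(x-E_\nu(x))+c\,\Phi(p_\nu)$ and combine the two estimates with $E_\nu(x)=c\,p_\nu$ to obtain $\norm{\Phi(x)-c\,p_\nu}\le\ve(1+c)$, where $c=\mu(K_\nu)\varphi(x)>0$. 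Since $\Phi$ is contractive and $\Phi(\tilde a)\in I$, the element $c\,p_\nu$ lies within $\norm{x-\tilde a}+\ve(1+c)$ of $I$; letting the approximations tend to $0$ places $c\,p_\nu\in I$, and $c>0$ yields $p_\nu\in I$, completing this half.

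For the uniqueness of KMS-weights I first translate the KMS condition into invariance. Since $K_\nu$ is compact it lies in $\ker\Delta$, so $\sigma^\varphi$ fixes $p_\nu$ and $\rg(K_\nu)$ pointwise; in particular $\psi(p_\nu)<\infty$ and $\psi$ restricts to a bounded positive functional on each corner $p_\nu\rg(G)p_\nu$. The hypothesis that the $g_i$ may be chosen in $\ker\Delta$ is exactly what places $\lambda_{g_i}$ in the centralizer: from $\sigma^\varphi_{-i}(\lambda_g)=\Delta(g)\lambda_g=\lambda_g$ the KMS identity gives $\psi(\lambda_g y\lambda_g^*)=\psi(y)$, hence the \emph{exact} invariance $\psi\circ\Phi=\psi$ for every averaging map $\Phi$ from condition (2). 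Taking $C_{cc}(G)\subseteq\bigcup_\nu A_\nu$ into account, the computation of the previous paragraph applies verbatim to positive $x=p_\nu xp_\nu\in C_{cc}(G)$, giving $\Phi(x)\ge 0$ and $\Phi(x)\to c\,p_\nu$ in norm with $c=\mu(K_\nu)\varphi(x)$.

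The hard part is that $\psi$ is in general unbounded, so norm-smallness cannot be pushed through $\psi$: the fixed quantity $\psi(x)-c\,\psi(p_\nu)$ will not decay merely because $\norm{\Phi(x)-c\,p_\nu}\to 0$. I resolve this using lower semicontinuity together with the exact invariance $\psi(\Phi(x))=\psi(x)$: from $\Phi(x)\to c\,p_\nu$ and $\psi(\Phi(x))=\psi(x)$, lower semicontinuity yields $c\,\psi(p_\nu)\le\psi(x)$, that is $\lambda\,\varphi(x)\le\psi(x)$ for $\lambda:=\mu(K_\nu)\psi(p_\nu)$, where I use $\lambda\,\varphi(p_\nu)=\psi(p_\nu)$. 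Applying this same inequality inside the corner to $p_\nu-x$ for $0\le x\le p_\nu$ and cancelling the now-equal values at $p_\nu$ produces the reverse inequality $\psi(x)\le\lambda\,\varphi(x)$, hence $\psi=\lambda\,\varphi$ on each $p_\nu\rg(G)p_\nu$. Comparing two nested corners shows $\lambda$ is independent of $\nu$, and since $\bigcup_\nu p_\nu\rg(G)p_\nu$ is a dense subalgebra of the Pedersen ideal on which the two lower semicontinuous weights agree, I conclude $\psi=\lambda\,\varphi$; that is, the Plancherel weight is the unique $\sigma^\varphi$-KMS-weight up to a scalar multiple.
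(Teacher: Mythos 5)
Your first half (C$^*$-simplicity) is essentially the paper's own argument: compress a positive element of the ideal $I$ by $p_\nu$, observe that $E_\nu$ maps the corner $p_\nu\rg(G)p_\nu$ into $\IC p_\nu$, apply condition (2), and conclude $p_\nu\in I$ using that $(p_\nu)_\nu$ is an approximate unit. Your packaging is in one respect slightly cleaner: by approximating $\tilde a=p_\nu a p_\nu\in I$ with $x\in A_\nu$ and pushing the approximation through the contraction $\Phi$, you never need the paper's (unjustified-as-stated) claim that $I\cap\bigcup_\nu A_\nu\neq 0$.

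The uniqueness half is where you genuinely diverge, and where there are two real soft spots. The paper verifies Raum's criterion (Proposition \ref{prop;unique}), namely $\psi(\lambda_s p_L)=0$ for $s\in G\setminus L$, via a Cauchy--Schwarz estimate against the auxiliary element $y=\frac1n\sum_j p_\nu\lambda_{g_j}^*p_\nu\lambda_{g_j}p_\nu$; you instead try to identify $\psi$ with $\lambda\varphi$ directly on corners by combining exact $\Phi$-invariance with lower semicontinuity. First, a level mismatch: for $x\in C_{cc}(G)$ with $x=p_\nu xp_\nu$, condition (2) is available only at some index $\nu'$ with $x\in A_{\nu'}$, and there $E_{\nu'}(x)=\mu(K_{\nu'})\varphi(x)p_{\nu'}$, so the averages converge to $\mu(K_{\nu'})\varphi(x)p_{\nu'}$, \emph{not} to $\mu(K_\nu)\varphi(x)p_\nu$ as you wrote; your cancellation ``at $p_\nu$'' then silently equates $\lambda_\nu:=\mu(K_\nu)\psi(p_\nu)$ with $\lambda_{\nu'}$, and ``comparing two nested corners'' via lower semicontinuity alone yields only that $(\lambda_\nu)$ is \emph{decreasing}, not constant. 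This is repairable inside your own framework: run both inequalities at the single level $\nu'$ (use $p_{\nu'}-x$ rather than $p_\nu-x$, noting $p_{\nu'}\in A_{\nu'}$ and that $E_{\nu'}$ maps the $p_{\nu'}$-corner into $\IC p_{\nu'}$), obtaining $\psi=\lambda_{\nu'}\varphi$ on $A_{\nu'}\cap p_{\nu'}\rg(G)p_{\nu'}$, and then evaluate at $x=p_\nu$ to force $\lambda_\nu=\lambda_{\nu'}$. Second, your final globalization---``two lower semicontinuous weights agreeing on a dense subalgebra of the Pedersen ideal coincide''---is asserted, not proved, and lower semicontinuity by itself gives only one-sided control, so equality does not propagate from a dense set without further input (this is exactly the role the modular structure plays). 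The clean finish, available to you at no extra cost, is the paper's own Proposition \ref{prop;unique}: once $\psi=\lambda\varphi$ on the $p_L$-corner, the KMS condition gives $\psi(\lambda_s p_L)=\psi(p_L\lambda_s p_L)=\lambda\varphi(p_L\lambda_s p_L)=0$ for $s\in G\setminus L$, and that proposition identifies $\psi$ with a multiple of the Plancherel weight. One last citation point: $\psi(p_\nu)<\infty$ follows from Proposition \ref{prop;domain}, not from flow-invariance of $p_\nu$ as your text suggests.
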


The assumption of the above lemma is an analogue of the Powers averaging property with respect to conditional expectations,
and not only groups in our main theorem,
but also two kinds of non-discrete \Cs-simple groups constructed by Suzuki \cite{S1}, \cite{S2} satisfy it.
\section{Preliminaries}
\subsection{Weights on \Cs-algebras}
Let $A$ be a \Cs-algebra and $A^+$ be the set of positive elements in $A$.
A map $\psi\colon A^{+}\rightarrow [0,\infty]$ is called a weight on $A$,
if for any $x,y\in A^{+}$ and $r>0$,
we have
\[
\psi(x+y)=\psi(x)+\psi(y),
\quad
\psi(rx)=r\psi(x).
\]
For a weight $\psi$ on $A$,
we set 
\begin{align*}
\fm_{\psi}^{+}&:=\{x\in A^{+}\mid \psi(x)<\infty\},\\
\fn_{\psi}&:=\{y\in A\mid y^{*}y\in\fm_{\psi}^{+}\},\\
\fm_{\psi}&:=\fn_{\psi}^{*}\fn_{\psi}={\mathrm{span}}_{\IC}\fm_{\psi}^{+},
\end{align*}
as in Definition 2.17 of \cite{R1}.
This $\fn_{\psi}$ is a left ideal of $A$ and this $\fm_{\psi}$ is a subalgebra of $A$.
There is a linear functional from $\fm_{\psi}$ to $\IC$, which is an extension of $\psi|_{\fm_{\psi}^{+}}$.
We say that $\psi$ is densely defined,
if $\fm_{\psi}$ is dense in $A$.
In this article,
we suppose that every weight $\psi$ is non-zero, lower semi-continuous and densely defined.

\subsubsection{KMS-weights}
For a continuous one-parameter group $(\sigma_t)_{t\in \IR}$ of $\ast$-automorphisms of $A$,
the set of analytic elements is dense in $A$. (See Section 1 of \cite{Ku}.)
A weight  $\psi$ is called a $\sigma$-KMS-weight,
if 
\[
\psi\circ\sigma_{t}=\psi,
\quad
\psi(x^{*}x)=\psi(\sigma_{\frac{i}{2}}(x)\sigma_{\frac{i}{2}}(x)^{*})
\]
for all $t\in \IR$ and analytic elements $x\in A$. 
(See Section 2.6.3 of {\cite{R1}}.)

The following proposition holds.
\begin{prop}[{\cite[Proposition 2.24]{R1}}]
Let $(\sigma_t)_{t\in \IR}$ be a one-parameter group of $\ast$-automorphisms on a \Cs-algebra $A$,
and $\psi$ be a $\sigma$-KMS-weight.
Any analytic elements $x, y\in\fn_{\psi}\cap\fn_{\psi}^{*}$ satisfy $\psi(xy)=\psi(y\sigma_{-i}(x))$.
\end{prop}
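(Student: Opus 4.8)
The plan is to reduce the two-point identity to the defining one-variable KMS condition by polarization, and then to absorb the resulting modular twist into $\psi$ using the group law of $(\sigma_t)$ on analytic elements together with the $\sigma$-invariance of $\psi$. Throughout I would use three standard facts about analytic elements: they form a $*$-closed subspace, each $\sigma_\zeta$ acts multiplicatively on them, and $\sigma_\zeta(w)^*=\sigma_{\bar\zeta}(w^*)$ (the analytic continuation of $\sigma_t(w)^*=\sigma_t(w^*)$, valid for real $t$).

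First I would polarize. For analytic $a,b\in\fn_\psi\cap\fn_\psi^*$, apply the identity $\psi(z^*z)=\psi(\sigma_{i/2}(z)\sigma_{i/2}(z)^*)$ to $z=a+i^k b$, multiply by $i^{-k}$, and sum over $k=0,1,2,3$. Using $\sum_k i^{-k}=\sum_k i^{-2k}=0$ and $\sum_k 1=4$, and rewriting $\sigma_{i/2}(z)^*=\sigma_{-i/2}(z^*)$, the diagonal terms and one of the two cross terms cancel on each side, leaving
\[
\psi(a^*b)=\psi\bigl(\sigma_{i/2}(b)\,\sigma_{-i/2}(a^*)\bigr).
\]
Specializing $a=x^*$ and $b=y$, which is legitimate since the analytic part of $\fn_\psi\cap\fn_\psi^*$ is $*$-closed, gives
\[
\psi(xy)=\psi\bigl(\sigma_{i/2}(y)\,\sigma_{-i/2}(x)\bigr).
\]

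Next I would merge the two half-twists. Since $\sigma_{i/2}$ is multiplicative on analytic elements and $\sigma_{i/2}(\sigma_{-i}(x))=\sigma_{-i/2}(x)$, we have
\[
\sigma_{i/2}(y)\,\sigma_{-i/2}(x)=\sigma_{i/2}\bigl(y\,\sigma_{-i}(x)\bigr),
\]
so that $\psi(xy)=\psi\bigl(\sigma_{i/2}(w)\bigr)$ with $w:=y\,\sigma_{-i}(x)$. It then remains only to show $\psi(\sigma_{i/2}(w))=\psi(w)$, after which the claim follows.

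The hard part will be exactly this last step, because the invariance $\psi\circ\sigma_t=\psi$ is hypothesized only for real $t$. My plan is to set $f(\zeta):=\psi(\sigma_\zeta(w))$ and argue that $f$ extends holomorphically to a horizontal strip about $\IR$ on which it is constant on $\IR$; uniqueness of analytic continuation then forces $f(i/2)=f(0)=\psi(w)$. To run this I must first check $w=y\,\sigma_{-i}(x)\in\fm_\psi$, so that $\psi(w)$ is meaningful: this uses $y\in\fn_\psi^*$, that $\sigma_{-i}$ carries the analytic elements of $\fn_\psi$ back into $\fn_\psi$ (a consequence of $\sigma_t$ preserving $\fn_\psi$ for real $t$, propagated to imaginary time), and $\fm_\psi=\fn_\psi^*\fn_\psi$. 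The genuinely delicate point is the holomorphy and boundedness of $\zeta\mapsto\psi(\sigma_\zeta(w))$ on the strip, where I would combine the Cauchy-type estimates for the analytic family $\zeta\mapsto\sigma_\zeta(w)$ with lower semicontinuity and the density of $\fm_\psi$, in order to invoke a Phragm\'en--Lindel\"of/identity-theorem argument. As a conceptual cross-check I would also realize both sides as inner products in the GNS space of $\psi$, writing $\psi(xy)=\ip{\Lambda(y),\Lambda(x^*)}$ and $\psi(y\sigma_{-i}(x))=\ip{\Lambda(\sigma_{-i}(x)),\Lambda(y^*)}$; the identity then reflects the compatibility of the modular operator $\Delta$ (which implements $\sigma_t$ via $\Delta^{it}$, so that $\Lambda(\sigma_{-i}(x))=\Delta\Lambda(x)$) with the closed involution $\Lambda(z)\mapsto\Lambda(z^*)$, which is the content of Tomita--Takesaki theory. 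I would present the elementary polarization route and keep the modular picture only as motivation.
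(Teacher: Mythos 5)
Your polarization step is correct and is a genuine reduction: applying $\psi(z^*z)=\psi(\sigma_{i/2}(z)\sigma_{i/2}(z)^*)$ to $z=a+i^kb$ and summing with weights $i^{-k}$ does give $\psi(a^*b)=\psi(\sigma_{i/2}(b)\sigma_{-i/2}(a^*))$, with every term in $\fm_\psi$ because the one-variable KMS identity itself shows that $\sigma_{-i/2}$ carries analytic elements of $\fn_\psi^*$ into $\fn_\psi$; the merge into $\psi(\sigma_{i/2}(y\,\sigma_{-i}(x)))$ is likewise fine. Note, for calibration, that the paper offers no proof of this proposition at all: it is imported from \cite[Proposition~2.24]{R1}, whose argument (like Kustermans' \cite{Ku}) runs through the GNS construction.

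The genuine gap is exactly the step you defer to the end, $\psi(\sigma_{i/2}(w))=\psi(w)$, and as sketched it would fail for two reasons. First, the needed domain facts are not available by soft means: even $\sigma_{-i}(x)\in\fn_\psi$, which you need to make $w=y\,\sigma_{-i}(x)$ an element of $\fm_\psi=\fn_\psi^*\fn_\psi$, does not follow from ``$\sigma_t$ preserves $\fn_\psi$ for real $t$, propagated to imaginary time'' --- invariance of a left ideal under the real flow gives no control along imaginary directions, and what the KMS identity hands you is only $\sigma_{-i/2}(u)\in\fn_\psi$ for analytic $u\in\fn_\psi^*$, which does not iterate to $\sigma_{-i}$ on its own. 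Second, and more fundamentally, holomorphy of $f(\zeta)=\psi(\sigma_\zeta(w))$ cannot be extracted the way you propose: $\psi$ is an unbounded, merely lower semicontinuous functional, so its composition with the entire $A$-valued map $\zeta\mapsto\sigma_\zeta(w)$ need not even be continuous, and a Phragm\'en--Lindel\"of or identity-theorem argument has no holomorphic function to act on; lower semicontinuity yields one-sided estimates, not analyticity. (For a KMS \emph{state} your argument closes instantly because the functional is bounded --- which is precisely why the weight case is the hard analytic content here.) The proofs in \cite{Ku} and \cite{R1} resolve this by working in the GNS triple: lower semicontinuity makes the GNS map $\Lambda$ closed, $\sigma$-invariance gives a unitary group $\Delta^{it}$ with $\Delta^{it}\Lambda(a)=\Lambda(\sigma_t(a))$, Gaussian smoothing produces a core on which $\Lambda(\sigma_\zeta(a))=\Delta^{i\zeta}\Lambda(a)$, and the one-variable KMS condition says the map $\Lambda(x)\mapsto\Lambda(\sigma_{-i/2}(x^*))$ is isometric; then $\psi(xy)=\ip{\Lambda(y),\Lambda(x^*)}=\ip{\Lambda(\sigma_{-i}(x)),\Lambda(y^*)}=\psi(y\,\sigma_{-i}(x))$. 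In short, the ``modular picture'' you demote to motivation is not a cross-check but the actual proof; your elementary route cannot be closed without rebuilding that machinery.
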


\subsection{Reduced group \Cs-algebras}
In this this article,
let $G$ be a locally compact group with a left Haar measure $\mu$ and $\lambda$ be a left regular representation on a Hilbert space $L^{2}(G):=L^{2}(G,\mu)$.
The representation $\lambda$ extends to a $\ast$-representation of $C_{c}(G)$ on $B(L^{2}(G))$ as follows.
For every $\xi\in L^{2}(G),
f\in C_{c}(G)$,
and $g\in G$,
we have
\[
   \lambda (f)\xi (g):=\int_{G} f(h) \xi(h^{-1}g) d\mu (h).
\]
The reduced group \Cs-algebra $\rg(G)$ of G is the norm closure of $\lambda (C_c(G))$.

\subsubsection{Averaging projections}
If $K$ is a compact open subgroup of $G$,
a projection 
\[
p_{K}:=\lambda(\frac{1}{\mu(K)}\chi_{K})
\]
induced by an indicator function $\chi_{K}\in C_{c}(G)$ of $K$ is called the averaging projection.
(See Section 2.6.2 of \cite{R1} and Section 2 of \cite{S2}.)
These averaging projections satisfy $p_{K}\geq p_{L}$ for any compact open subgroups $K, L$ with
$K\subset{L}$,
since $p_K$ is the orthogonal projection onto the subspace of $\lambda(K)$-fixed points in $L^{2}(G)$.
When $G$ is totally disconnected,
a family $\Omega$ of compact open subgroups of $G$ generate a neighborhood basis of $e\in G$,
and $\{p_{K}\}_{K\in\Omega}$ give approximate units of $\rg(G)$.

\subsubsection{Conditional expectations}
For an open subgroup $H$ of $G$,
we may identify $\rg(H)$ with the \Cs-subalgebra of $\rg(G)$ generated by $\lambda(C_{c}(H))$.
It is well-known that the restriction map $E_{K}\colon C_{c}(G)\rightarrow C_{c}(H)$ extends to a faithful conditional expectation from $\rg(G)$ onto $\rg(H)$.
(See Section 2.5 of \cite{B-O}.)
This conditional expectation is also denoted by $E_K$ in this article.

\subsubsection{The Plancherel weight}
Let $\Delta$ be the modular function of $G$.
The modular flow  $(\sigma_t^{\varphi})_{t\in \IR}$ on $\rg(G)$ is defined as
\[
  \sigma_t^{\varphi}(f)(g):=\Delta(g)^{it}f(g),
\]
for any $f\in C_{c}(G)$ and $g\in G$.
The map $\varphi\colon C_{c}(G)\ni f \mapsto f(e)\in\IC$ extends to a $\sigma^\varphi$-KMS-weight on $\rg(G)$.
(See Section 2.6.2 of \cite{R1}.)
This is a restriction of the Plancherel weight on the group von Neumann algebra $L(G)$ 
(see \cite{Ta} for a definition),
which is also called the Plancherel weight.
When $G$ is totally disconnected,
for a set $\Omega$ of all compact open subgroups in $G$,
we set a subalgebra
\[
C_{cc}(G):=\bigcup_{K\in\Omega}{p_{K}C_{c}(G)p_{K}}
\]
of $\rg(G)$ as in Section 4 of \cite{S2}.
The following propositions about general $\sigma^\varphi$-KMS-weights are known.
We assume that $G$ is a totally disconnected group and $(\sigma_t^{\varphi})_{t\in \IR}$ is a modular flow on $\rg(G)$ in the following propositions.
\begin{prop}[{\cite[Section 4]{S2}}]\label{prop;domain}
If $\psi$ is a $\sigma_t^{\varphi}$-KMS-weight on $\rg(G)$,
then $C_{cc}(G)\subset{\fm_{\psi}}$.
\end{prop}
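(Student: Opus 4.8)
The plan is to reduce the statement to the single assertion that $\psi(p_K)<\infty$ for every compact open subgroup $K$, and then to establish this finiteness by combining the left-ideal structure of $\fn_\psi$ with the KMS symmetry of $\psi$. First I would observe that once $p_K\in\fn_\psi$ (equivalently $p_K\in\fm_\psi^+$, since $p_K=p_K^*p_K$), the conclusion is immediate: for $f\in C_c(G)$ the element $\lambda(f)p_K$ lies in $\fn_\psi$ because $\fn_\psi$ is a left ideal, while $p_K^*=p_K\in\fn_\psi^*$, so
\[
p_K\lambda(f)p_K=p_K^{*}\,(\lambda(f)p_K)\in\fn_\psi^{*}\,\fn_\psi=\fm_\psi .
\]
Taking the union over $K\in\Omega$ then gives $C_{cc}(G)\subseteq\fm_\psi$. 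Thus everything rests on proving $\psi(p_K)<\infty$.

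The decisive structural input is that $p_K$ is fixed by the modular flow. Indeed $\Delta(K)$ is a compact subgroup of the multiplicative group $\IR_{>0}$, hence trivial, so $\sigma_t^{\varphi}(p_K)=\lambda(\tfrac{1}{\mu(K)}\Delta(\cdot)^{it}\chi_K)=p_K$; consequently $p_K$ is analytic with $\sigma_z^{\varphi}(p_K)=p_K$ for all $z$. Using this I would prove the key lemma: \emph{if $a\in\fn_\psi$ is analytic, then $ap_K\in\fn_\psi$}. The point is that the left-ideal property only controls multiplication by $p_K$ on the left, and moving the projection to the right is exactly where the KMS condition must enter. Concretely, $a\in\fn_\psi$ analytic gives $\psi(a^{*}a)<\infty$, so the KMS identity yields $\sigma_{i/2}^{\varphi}(a)\sigma_{i/2}^{\varphi}(a)^{*}\in\fm_\psi^+$; since $0\le p_K\le 1$ we have $\sigma_{i/2}^{\varphi}(a)p_K\sigma_{i/2}^{\varphi}(a)^{*}\le\sigma_{i/2}^{\varphi}(a)\sigma_{i/2}^{\varphi}(a)^{*}$, and heredity of $\fm_\psi^+$ places the left-hand side in $\fm_\psi^+$. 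Because $\sigma_{i/2}^{\varphi}(p_K)=p_K=p_K^{2}$, that element equals $\sigma_{i/2}^{\varphi}(ap_K)\sigma_{i/2}^{\varphi}(ap_K)^{*}$, and the KMS identity applied to the analytic element $ap_K$ gives $\psi\big((ap_K)^{*}(ap_K)\big)=\psi\big(\sigma_{i/2}^{\varphi}(ap_K)\sigma_{i/2}^{\varphi}(ap_K)^{*}\big)<\infty$, i.e.\ $ap_K\in\fn_\psi$.

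With the lemma in hand I would produce a positive element dominating $p_K$. Since $\psi$ is densely defined, the $\sigma^{\varphi}$-analytic elements of $\fm_\psi$ are norm-dense in $\rg(G)$, so I can choose an analytic $b\in\fm_\psi$ with $\norm{b-p_K}$ arbitrarily small and set $a:=b^{*}b$, an analytic element of $\fm_\psi^+\subseteq\fn_\psi$ with $\norm{a-p_K}<1/2$. The norm estimate gives, for every $\xi$, $\langle p_K a^{2}p_K\xi,\xi\rangle=\norm{ap_K\xi}^{2}\ge\tfrac14\norm{p_K\xi}^{2}$, hence $p_K a^{2}p_K\ge\tfrac14 p_K$. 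On the other hand the lemma gives $ap_K\in\fn_\psi$, so $p_K a^{2}p_K=(ap_K)^{*}(ap_K)\in\fm_\psi^+$. Monotonicity of $\psi$ then forces $\tfrac14\psi(p_K)\le\psi(p_K a^{2}p_K)<\infty$, which is the finiteness needed to run the reduction of the first paragraph.

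The main obstacle is the key lemma of the second paragraph: the mere fact that $\fn_\psi$ is a left ideal gives no control over right multiplication, and it is only the $\sigma^{\varphi}$-invariance of $p_K$ together with the KMS symmetry of $\psi$ that allows one to transport the projection across. The remaining ingredients—norm-density and analyticity of elements of $\fm_\psi$ under the modular flow, monotonicity of weights, and heredity of $\fm_\psi^+$—are standard in the Combes–Kustermans theory of KMS-weights and may be quoted from the references already cited.
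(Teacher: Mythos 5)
Your proof is correct: the reduction of the statement to $\psi(p_K)<\infty$ via the left-ideal property of $\fn_\psi$ (giving $p_K\lambda(f)p_K\in\fn_\psi^*\fn_\psi=\fm_\psi$), the observation that $\Delta|_K\equiv 1$ makes $p_K$ a fixed point of the modular flow, and the KMS chain $\psi\bigl((ap_K)^*(ap_K)\bigr)=\psi\bigl(\sigma_{i/2}(a)p_K\sigma_{i/2}(a)^*\bigr)\leq\psi\bigl(\sigma_{i/2}(a)\sigma_{i/2}(a)^*\bigr)=\psi(a^*a)<\infty$ for analytic $a\in\fn_\psi$ approximating $p_K$ are all sound, with the remaining ingredients (heredity of $\fm_\psi^+$, density of analytic elements of $\fn_\psi$, which uses the $\sigma^\varphi$-invariance and lower semicontinuity of $\psi$) correctly identified as standard. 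The paper gives no proof of its own, quoting \cite[Section 4]{S2}, and your argument is essentially the one in that cited source, so your proposal matches the intended proof.
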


\begin{prop}[{\cite[Lemma 2.23]{R1}}, {\cite[proof of Theorem 4.1]{S2}}]\label{prop;unique}
If a $\sigma_t^{\varphi}$-KMS weight $\psi$ on $\rg(G)$ satisfies $\psi(\lambda_{g}p_{K})=0$
for all $g\in G\setminus K$ and compact subgroups $K$,
then $\frac{1}{\psi(p_{K})\mu(K)}\psi$ is the Plancherel weight on $\rg(G)$ for any compact open subgroup $K$.
\end{prop}

\subsection{Constructions of non-discrete \Cs-simple groups}
In this subsection, we explain two ways to construct non-discrete example of \Cs-simple groups,
which are found by Suzuki in \cite{S1}, \cite{S2}.

First examples are constructed by the following proposition.

\begin{prop}[{\cite[Proposition]{S1}}]\label{prop;elementary}
Let $G$ be a locally compact group.
Assume we have a decreasing sequence $(K_n)_{n=1}^\infty$ of compact open subgroups of $G$
and an increasing sequence $(L_n)_{n=1}^\infty$ of clopen subgroups of $G$ with the following properties.
\begin{itemize}
\item Each $L_n$ contains $K_n$ and normalizes it.
\item The quotient groups $L_n/K_n$ are \Cs -simple.
\item The intersection $\bigcap_{n=1}^\infty K_n$ is the trivial subgroup $\{e\}$.
\item The union $\bigcup_{n=1}^\infty L_n$ is equal to $G$.
\end{itemize}
Then $G$ is \Cs-simple and has the unique trace property.
\end{prop}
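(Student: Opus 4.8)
The plan is to verify the hypotheses of Lemma \ref{lem:ce1} with the choices $K_\nu = K_n$ and $A_\nu = \rg(L_n)$. First I would check that $(\rg(L_n))_n$ is an increasing net of \Cs-subalgebras with $\overline{\bigcup_n \rg(L_n)} = \rg(G)$: since each $L_n$ is clopen and the $L_n$ increase to $G$, any $f \in C_c(G)$ has compact support covered by some $L_N$, so $C_c(G) = \bigcup_n C_c(L_n)$ and the claim follows on taking closures. The decreasing net $(K_n)$ of compact open subgroups satisfies $\bigcap_n K_n = \{e\}$ by hypothesis. The structural input I would isolate is that, because $K_n$ is normal in $L_n$, the averaging projection $p_n := p_{K_n}$ is central in $\rg(L_n)$, the corner $p_n\rg(L_n) = p_n\rg(L_n)p_n$ is canonically isomorphic to $\rg(L_n/K_n)$ via $p_n\lambda_g \mapsto \lambda_{gK_n}$, and $\lambda_g p_n \lambda_g^* = p_{gK_ng^{-1}} = p_n$ for every $g \in L_n$.

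Condition (1) of Lemma \ref{lem:ce1} is immediate, since $p_n \in \rg(K_n) \subset \rg(L_n) = A_n$. For condition (2), fix a self-adjoint $x \in \rg(L_n)$ and $\epsilon > 0$. The crucial simplification is to search for the averaging elements \emph{inside} $L_n$: if $g_1,\dots,g_\ell \in L_n$, then $\lambda_{g_i}p_n\lambda_{g_i}^* = p_n$ for each $i$, so $\frac1\ell\sum_i \lambda_{g_i}p_n\lambda_{g_i}^* = p_n$ and the second inequality holds exactly. It therefore remains to find $g_1,\dots,g_\ell \in L_n$ with $\norm{\frac1\ell\sum_{i=1}^\ell \lambda_{g_i}(x - E_n(x))\lambda_{g_i}^*} < \epsilon$, where $E_n = E_{K_n}$ is the canonical conditional expectation of $\rg(L_n)$ onto $\rg(K_n)$.

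To produce these elements I would exploit \Cs-simplicity of the discrete quotient $Q := L_n/K_n$. Viewing $\rg(L_n)$ as the reduced \Cs-algebra of the extension $1 \to K_n \to L_n \to Q \to 1$, the expectation $E_n$ is exactly the canonical expectation onto the coefficient algebra $\rg(K_n)$, so $x - E_n(x)$ is \emph{off-diagonal}: it has no $\rg(K_n)$-component. Conjugating by a lift of $s \in Q$ sends a summand supported over $s' \in Q$ to one supported over $s s' s^{-1}$; since $s s' s^{-1} = e$ forces $s' = e$, every conjugate of $x - E_n(x)$ remains off-diagonal, and $0$ is a consistent target for the averaging. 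The Powers averaging property of the \Cs-simple group $Q$ (Haagerup \cite{Haa}) then provides a finite Powers set in $Q$ whose conjugation-average drives the off-diagonal part to small norm; lifting this set to $g_1,\dots,g_\ell \in L_n$ gives the required estimate. I would first prove this for $x$ in the dense subalgebra of elements with finite $Q$-support and then pass to general $x \in \rg(L_n)$ by approximation.

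The main obstacle is precisely this last transfer: the coefficients appearing in $x - E_n(x)$ lie in $\rg(K_n)$ rather than in $\IC$, so I need the Powers averaging not in its scalar form but in an \emph{operator-coefficient} form, averaging $\sum_{s\neq e} b_s\lambda_s$ with $b_s \in \rg(K_n)$ to near zero. This is where the argument must do real work: one has to check that the Powers estimate survives when the scalar coefficients are replaced by norm-bounded elements of $\rg(K_n)$, using that the conjugates $g s g^{-1}$ chosen along the Powers set are sufficiently spread out that $\norm{\frac1\ell\sum_i b_{s,i}\lambda_{g_i s g_i^{-1}}}$ decays like $\ell^{-1/2}$ uniformly over norm-bounded coefficients. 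Granting this, both conditions of Lemma \ref{lem:ce1} hold and $G$ is \Cs-simple. For the unique trace property I would restrict an arbitrary trace of $\rg(G)$ to the corners $p_n\rg(L_n) \cong \rg(Q)$, where \Cs-simplicity of $Q$ forces uniqueness, and then use $\overline{\bigcup_n\rg(L_n)} = \rg(G)$ together with $\bigcap_n K_n = \{e\}$ to pin the trace down on all of $\rg(G)$; alternatively this follows from the ``moreover'' clause of Lemma \ref{lem:ce1} once the averaging elements are chosen in the kernel of the modular function.
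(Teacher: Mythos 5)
Your overall strategy---verifying the two conditions of Lemma \ref{lem:ce1} with $K_\nu=K_n$, using normality of $K_n$ in $L_n$ to get $\lambda_g p_{K_n}\lambda_g^{*}=p_{K_n}$ exactly for $g\in L_n$ (so the second inequality is trivial), and invoking Haagerup's averaging theorem for the \Cs-simple quotient $L_n/K_n$---is the same as the paper's, which treats this statement in the example immediately following Lemma \ref{lem:ce1}. But your choice $A_\nu=\rg(L_n)$ creates exactly the problem you flag and then leave unresolved. For $x\in\rg(L_n)$ the element $x-E_{K_n}(x)$ has coefficients in $\rg(K_n)$, while Theorem 4.5 of \cite{Haa} is a statement about scalar coefficients ($a-\tau(a)1$ in the reduced \Cs-algebra of a discrete group); it does not formally yield the operator-coefficient estimate $\norm{\frac{1}{\ell}\sum_{i}\lambda_{g_i}\bigl(\sum_{s\neq e}b_s\lambda_s\bigr)\lambda_{g_i}^{*}}<\epsilon$ with $b_s\in\rg(K_n)$. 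Note moreover that conjugation moves the coefficients themselves, $\lambda_{g}b_s\lambda_s\lambda_{g}^{*}=(\lambda_{g}b_s\lambda_{g}^{*})\lambda_{gsg^{-1}}$, so even a version ``uniform over norm-bounded coefficients'' requires a genuinely geometric Powers-type argument with paradoxical sets that are unions of $K_n$-cosets (the kind of argument the paper carries out in Lemma \ref{lem:PS} for a different purpose), not an appeal to \cite{Haa}. As written, your proof is incomplete precisely at its central analytic step (``Granting this\dots'').

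The gap disappears with a small change, which is what the paper actually does: take $A_n$ to be the corner $\rg(L_n)p_{K_n}$ rather than all of $\rg(L_n)$. Since $K_n$ is normal in $L_n$, the projection $p_{K_n}$ is central in $\rg(L_n)$ and $\rg(L_n)p_{K_n}\cong\rg(L_n/K_n)$, and under this isomorphism the restriction of $E_{K_n}$ becomes the canonical trace of $\rg(L_n/K_n)$. Hence for self-adjoint $x$ in the corner, $x-E_{K_n}(x)$ corresponds to $x-\tau(x)1$, the trace part is scalar, and Haagerup's theorem applies verbatim; lifting the averaging elements to $L_n$ keeps your exact identity for $p_{K_n}$. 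One must then check what your choice made automatic: that $(\rg(L_n)p_{K_n})_n$ is increasing (using $p_{K_{n+1}}\geq p_{K_n}$ and $p_{K_n}\in\rg(L_{n+1})$) with dense union containing $C_{cc}(G)$, where density uses $\bigcap_n K_n=\{e\}$, i.e.\ that the $p_{K_n}$ form an approximate unit of $\rg(G)$. Your unimodularity observation ($L_n$ normalizes the compact open $K_n$, so $\Delta|_{L_n}=1$) and the appeal to the ``moreover'' clause of Lemma \ref{lem:ce1} for the unique trace property then go through exactly as in the paper.
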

The proposition above shows \Cs-simplicity of locally compact groups like $(\bigoplus_{n=1}^\infty \Gamma_n)\rtimes \prod_{n=1}^\infty F_n$.
(See Theorem in \cite{S1}.)
Here, 
$F_n$ are isomorphic to $\IZ_2$ and discrete groups $\Ga_n$ with $\IZ_2$ actions are induced by the splitting  short exact sequence
\[
\xymatrix@=10pt{ 0 \ar[r] &\Ga_n\ar[r]&\IZ\ast\IZ_2\ar@<0.5ex>[r]&\IZ_2\ar[r]\ar@<0.5ex>[l]&0\\}.
\]

The second construction is established in Section 3 of \cite{S2}.
For a totally disconnected group $G$ and the set $\Omega$ of all compact open subgroups of $G$,
let $\Upsilon_n$; $n\in \mathbb{N}$, be pairwise distinct copies of the group
\[\bigoplus_{K\in\Omega}\bigoplus_{G/K} \IZ_2,\]
equipped with $G$ action induced by the left translation action on $G/K$.
Similarly,
let $\Xi_n$; $n\in \mathbb{N}$, be pairwise distinct copies of $\IZ$ with the trivial $G$-action.
Set
 \[\Gamma_1:= \Upsilon_1,\quad
\Lambda_1:= \Gamma_1 \ast \Xi_1,\]
\[\Gamma_{n+1}:= \Lambda_n \times \Upsilon_{n+1},\quad
\Lambda_{n+1}:=\Gamma_{n+1} \ast \Xi_{n+1},\]
for all $n\in\IN$.
Define $\Lambda$ to be the inductive limit of the sequence,
\[\Gamma_1< \Lambda _1 < \Gamma_2 < \Lambda_2 < \cdots\]
of discrete groups with canonical $G$ actions,
and set
\[\cG:= \Lambda \rtimes G.\]
The following theorems hold.
\begin{thm}[{\cite[Theorem 3.1]{S2}}]\label{prop;non-elementary}
The locally compact group $\cG$ is \Cs-simple.
\end{thm}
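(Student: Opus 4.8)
The plan is to verify the two conditions of the averaging criterion in Lemma~\ref{lem:ce1}, which is designed precisely for non-discrete groups of this shape, rather than to route through Proposition~\ref{prop;elementary}. First I would fix the two nets. For the compact open subgroups take $K_\nu=\{e\}\times K$ as $K$ ranges over a neighbourhood basis $\Omega$ of $e$ in $G$ ordered by reverse inclusion; these are compact open subgroups of $\cG$ with $\bigcap_\nu K_\nu=\{e\}$, and $E_\nu$ is the expectation onto $\rg(K_\nu)\cong\rg(K)$. For the algebras take $A_n:=\rg(\Lambda_n\rtimes G)$, which increase and whose union is dense in $\rg(\cG)$ because $\bigcup_n\Lambda_n=\Lambda$. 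Combining these into a single net indexed by $\IN\times\Omega$ makes condition~(1) immediate, since $p_{K_\nu}\in\rg(K_\nu)\subset A_\nu$.

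The real content is condition~(2): given a self-adjoint $x\in A_n=\rg(\Lambda_n\rtimes G)$, which by a density argument I may take in $\lambda(C_c(\Lambda_n\rtimes G))$, I must average $z:=x-E_\nu(x)$ to small norm while keeping $p_{K_\nu}$ essentially fixed. The idea is to exploit the two factors introduced at stage $n+1$. Write $s:=s_{eK}$ for the order-two generator of the summand $\bigoplus_{G/K}\IZ_2$ of $\Upsilon_{n+1}$ sitting at the trivial coset $eK$, and $t$ for the generator of the free factor $\Xi_{n+1}$. Both commute with $K$: the left translation action of $k\in K$ fixes the coset $eK$, so $ksk^{-1}=s$, while $t$ commutes with all of $G$ by construction. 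Hence taking the averaging elements $g_j:=t^j s$ for $j=1,\dots,m$ gives $g_jKg_j^{-1}=K$, so $\lambda_{g_j}p_{K_\nu}\lambda_{g_j}^*=p_{K_\nu}$ and the second estimate in~(2) holds with value $0$.

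For the first estimate set $y:=\lambda_s z\lambda_s^*$, so that $\frac{1}{m}\sum_{j}\lambda_{g_j}z\lambda_{g_j}^*=\frac{1}{m}\sum_{j}\lambda_{t^j}\,y\,\lambda_{t^{-j}}$. The point of conjugating by $s$ first is that it activates the part of $z$ living over $G\setminus K$: an element $u\in G\setminus K$ satisfies $sus^{-1}=(s_{eK}s_{uK})u$ with $s_{uK}\neq s_{eK}$, so it acquires a nontrivial $\Upsilon_{n+1}$-component, while an element with nontrivial $\Lambda_n$-part retains one, since $\Upsilon_{n+1}$ commutes with $\Lambda_n$ inside $\Gamma_{n+1}=\Lambda_n\times\Upsilon_{n+1}$. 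Consequently every group element in the support of $y$ has nontrivial $\Gamma_{n+1}$-component and no $\Xi_{n+1}$-syllable, so none of them commutes with $t$. Because $t$ generates the free factor $\Xi_{n+1}$ in $\Lambda_{n+1}=\Gamma_{n+1}\ast\Xi_{n+1}$ and commutes with $G$, conjugation by $t^j$ turns each support element $\gamma' u'$ into a reduced word $t^j\gamma' t^{-j}u'$ whose length grows with $j$. The classical Powers averaging argument, applied in this free product, then yields $\norm{\frac{1}{m}\sum_{j}\lambda_{t^j}y\lambda_{t^{-j}}}\to 0$ as $m\to\infty$, so the first estimate is met for $m$ large.

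With both conditions of Lemma~\ref{lem:ce1} verified, $\cG$ is \Cs-simple. The main obstacle I anticipate is this last step: making the Powers cancellation fully rigorous in the present operator-coefficient setting, where the coefficients lie in $\rg(G)$ with $G$ not assumed amenable, so one must check that high powers of $t$ genuinely decouple the conjugates uniformly in the $G$-coefficients. Establishing that norm estimate carefully is where the work lies, whereas the bookkeeping with $s$, $t$, and $p_{K_\nu}$ is routine once the roles of the $\Upsilon$- and $\Xi$-factors are understood.
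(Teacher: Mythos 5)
Your overall route is the paper's own route: the paper likewise proves this theorem by verifying the two conditions of Lemma~\ref{lem:ce1}, first conjugating by $\delta_{K}$ (your $s$) so that every support element $g_i=s_ih_i$ with $s_i\neq e$ or $h_i\notin K$ acquires the nontrivial component $s_i'=s_i\delta_{K}\delta_{h_iK}^{-1}\in\Gamma_N\setminus\{e\}$, and then averaging over elements of the $\Xi$-free factors, which commute with $K$ (so $p_K$ is fixed exactly, as in your second estimate) and act trivially on $G$. The only real deviations are your choice $A_\nu=\rg(\Lambda_n\rtimes G)$ in place of the paper's corners $p_{K_\nu}\rg(\cG)p_{K_\nu}$, and your use of powers $t^j$ of a single generator of $\Xi_{n+1}$ where the paper takes $t_j\in\Xi_N\ast\Xi_{N+1}\cong\IF_2$ and invokes the proof of Lemma~1.2 of \cite{P-S}.

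The step you explicitly leave open, however, is the crux, and you have mislocated its difficulty: there is no ``operator-coefficient Powers estimate'' to prove, and non-amenability of $G$ is irrelevant, because the $G$-coefficients factor out completely. Concretely, for $x\in\lambda(C_c(\Lambda_n\rtimes G))$ write $z=x-E_K(x)=\sum_{\gamma\in F}\lambda_\gamma\lambda(f_\gamma)$ with $F\subset\Lambda_n$ finite and $f_\gamma\in C_c(G)$; each compact support meets only finitely many left cosets $uK$, and $sus^{-1}=\delta_{eK}\delta_{uK}u$ depends on $u$ only through $uK$, so $y=\lambda_s z\lambda_s^{*}$ is a \emph{finite} sum $\sum_i\lambda_{\gamma_i'}\lambda(h_i)$ with $\gamma_i'\in\Gamma_{n+1}\setminus\{e\}$ and $h_i\in C_c(G)$. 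Since $t$ centralizes $G$, $\lambda_{t^j}y\lambda_{t^{-j}}=\sum_i\lambda_{t^j\gamma_i't^{-j}}\lambda(h_i)$, and by the triangle inequality together with the unitary equivalence $\lambda^{\cG}|_{\Lambda}\simeq\lambda^{\Lambda}\otimes 1$ (which you state but do not exploit), the required bound reduces to the purely scalar estimate $\norm{\frac{1}{m}\sum_{j}\lambda^{\Lambda}_{t^j\gamma_i't^{-j}}}<\epsilon/|I|$ in $\rg(\Lambda)$, simultaneously for the finitely many $i$ --- exactly the estimate the paper extracts from \cite{P-S}. Your powers of a single $t$ do suffice here, since the conjugates $t^j\gamma t^{-j}$ are in free position (ping-pong with the pairwise disjoint sets of reduced words whose $t$-prefix is exactly $t^j$ followed by a nontrivial $\Gamma_{n+1}$-syllable, in the manner of Lemma~\ref{lem:PS}), but both this verification and the coefficient factorization are absent from your write-up; with them supplied, your argument closes and coincides in substance with the paper's.
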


\begin{thm}[{\cite[Theorem 4.1]{S2}}]
Up to scalar multiple, the Plancherel weight $\varphi$ is the only
$\sigma^\varphi$-KMS weight on $\rg(\cG)$.
When $\cG$ is non-unimodular, there is no tracial weight on $\rg(\cG)$.
\end{thm}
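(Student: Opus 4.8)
The plan is to deduce the uniqueness statement directly from Lemma~\ref{lem:ce1} by verifying its hypotheses for $\cG=\Lambda\rtimes G$, and then to rule out tracial weights by a short separate argument resting on the same averaging. Indeed, the \emph{moreover} clause of Lemma~\ref{lem:ce1} already asserts that the Plancherel weight is the only $\sigma^\varphi$-KMS-weight up to scalar once one produces a decreasing net $(K_\nu)_\nu$ of compact open subgroups with $\bigcap_\nu K_\nu=\{e\}$, an increasing net $(A_\nu)_\nu$ of \Cs-subalgebras with $\overline{\bigcup_\nu A_\nu}=\rg(\cG)$ satisfying (1) and (2), together with $C_{cc}(\cG)\subset\bigcup_\nu A_\nu$ and conjugators $g_1,\dots,g_n$ lying in $\ker\Delta$. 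The restriction to $\ker\Delta$ is natural here: for $h\in\ker\Delta$ one has $\sigma^\varphi_z(\lambda_h)=\Delta(h)^{iz}\lambda_h=\lambda_h$, so $\lambda_h$ is entire with $\sigma^\varphi_{-i}(\lambda_h)=\lambda_h$, and the KMS relation \cite[Proposition 2.24]{R1} gives $\psi(\lambda_h a\lambda_h^*)=\psi(a)$; thus averaging over such $h$ leaves any $\sigma^\varphi$-KMS-weight $\psi$ unchanged. So the first task is purely to verify the hypotheses of Lemma~\ref{lem:ce1} for $\cG$.

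For that verification I would take the $K_\nu$ from a neighbourhood basis of $e$ adapted to the inductive limit $\Gamma_1<\Lambda_1<\Gamma_2<\cdots$ defining $\Lambda$, with $A_\nu$ the matching increasing \Cs-subalgebras, so that $\overline{\bigcup_\nu A_\nu}=\rg(\cG)$ and $C_{cc}(\cG)\subset\bigcup_\nu A_\nu$ hold by construction. The decisive point is that the conjugators can be found inside $\ker\Delta$: since $\Lambda$ is a discrete normal subgroup, conjugation by any $\gamma\in\Lambda$ maps each fibre $\Lambda\times\{g\}$ bijectively to itself and hence preserves the Haar measure of $\cG$, so $\Delta|_\Lambda\equiv1$ and $\Lambda\subset\ker\Delta$. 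The free factors $\Xi_n\cong\IZ$ and the permutation factors $\Upsilon_n$ used to build $\Lambda$ then furnish, for each self-adjoint $x\in A_\nu$ and each $\epsilon>0$, finitely many $g_1,\dots,g_n\in\Lambda\subset\ker\Delta$ realizing the Powers-type averaging of condition (2); this is the same free-group-like mechanism that underlies the \Cs-simplicity of $\cG$ in Theorem~\ref{prop;non-elementary}. Granting this, Lemma~\ref{lem:ce1} yields the uniqueness of the $\sigma^\varphi$-KMS-weight.

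To exclude tracial weights when $\cG$ is non-unimodular, suppose for contradiction that $\tau$ is a non-zero tracial weight. A trace is invariant under conjugation by every $\lambda_g$, so the averaging of the previous paragraph—whose conjugators lie in $\ker\Delta$, a fortiori admissible—forces $\tau(p_K\lambda_g p_K)=0$ for every compact open $K$ and $g\notin K$, using $E_K(p_K\lambda_g p_K)=0$ and that the averaged element stays trapped in the fixed corner $p_K\,\rg(\cG)\,p_K$, on which $\tau$ is finite. Combining $\tau(\lambda_g p_K)=0$ for $g\notin K$ with $\lambda_h p_K=p_K$ for $h\in K$, and comparing the corners attached to $K\subset L$, one finds that $\tau(p_K)\mu(K)$ is independent of $K$; hence $\tau$ coincides with a fixed multiple $c\,\varphi$ of the Plancherel weight on the core $C_{cc}(\cG)$, so $\tau=c\,\varphi$. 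But from the definition $\sigma^\varphi_t(f)(g)=\Delta(g)^{it}f(g)$ the modular flow of $\varphi$ is trivial—equivalently $\varphi$ is a trace—exactly when $\Delta\equiv1$, i.e.\ when $\cG$ is unimodular. Since $\cG$ is non-unimodular, $c\,\varphi$ is tracial only for $c=0$, forcing $\tau=0$, a contradiction.

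The main obstacle is the verification of condition (2) of Lemma~\ref{lem:ce1} for $\cG$ with conjugators confined to $\ker\Delta$. Unlike a bounded tracial state, a $\sigma^\varphi$-KMS-weight is unbounded and only lower semi-continuous, so $\psi$ of the averaged element cannot be controlled by operator norm alone. The two clauses of (2) must therefore be arranged \emph{simultaneously}: the off-diagonal part $x-E_\nu(x)$ has to be spread to small norm while $p_\nu$ is kept (nearly) fixed, so that the average remains inside the single corner $p_\nu\,\rg(\cG)\,p_\nu$ where $\psi$ is finite and the smallness in norm translates into smallness of $\psi$. Producing conjugators that achieve both and still lie in $\Lambda\subset\ker\Delta$ is the delicate step; by comparison the tracial statement is soft, since conjugation invariance of a trace is automatic and the non-triviality of $\sigma^\varphi$ supplies the contradiction immediately.
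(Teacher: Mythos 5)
Your overall strategy is exactly the route the paper takes: the paper also deduces the KMS-uniqueness part from the \emph{moreover} clause of Lemma \ref{lem:ce1}, verified in the second example following that lemma with $K_\nu$ ranging over all compact open subgroups of $G$ and $A_\nu=p_{K_\nu}\rg(\cG)p_{K_\nu}$ (which makes condition (1) and $C_{cc}(\cG)\subset\bigcup_\nu A_\nu$ automatic, whereas your ``matching increasing \Cs-subalgebras adapted to the inductive limit'' are left unspecified). Your observation that $\Delta|_{\La}\equiv 1$ is correct, and in fact slightly cleaner than the paper's justification (the paper instead notes that the particular conjugators commute with $K$). The tracial statement is not re-proved in the paper at all (it is quoted from \cite[Theorem 4.1]{S2}); your contradiction argument for it is in the right spirit, modulo the unproved assertion that $\tau$ is finite on the corner, which does follow for a densely defined trace since $\tau(p_Kap_K)=\tau(a^{1/2}p_Ka^{1/2})\le\tau(a)$ for $a\in\fm_\tau^+$.

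However, there is a genuine gap at the point you yourself flag: you never construct the conjugators realizing condition (2), writing only ``Granting this'' and appealing to ``the same free-group-like mechanism.'' That construction is the entire content of the verification, and the naive mechanism fails: the factors $\Xi_n$ carry the \emph{trivial} $G$-action, so they commute with $G$ inside $\cG=\La\rtimes G$, and conjugation by them---indeed by any element of $\La$, which only alters the $\La$-coordinate---cannot shrink an element $p_K\lambda_h p_K$ with $h\in G\setminus K$ and trivial $\La$-part. The paper's example resolves this with a twist that your sketch omits: writing $g_i=s_ih_i$ with $s_i\in\La_{N-1}$, $h_i\in G$, one first conjugates by the generator $\delta_K\in\Upsilon_N$, so that $\delta_K g_i\delta_K^{-1}=s_i^{\prime}h_i$ with $s_i^{\prime}=s_i\delta_K\delta_{h_iK}^{-1}\neq e$ lying in $\Gamma_N$; only now does every element have a nontrivial $\La$-part, and Paschke--Salinas averaging over $t_1,\dots,t_m\in\Xi_N\ast\Xi_{N+1}$, which sit freely against $\Gamma_N$ in $\Gamma_N\ast\Xi_N\ast\Xi_{N+1}$, gives $\norm{\frac{1}{m}\sum_{j}\lambda^{\La}_{t_js_i^{\prime}t_j^{-1}}}<\ve/l$. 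Crucially, $t_j$ and $\delta_K$ commute with $K$, so $\lambda_{t_j\delta_K}p_K\lambda_{t_j\delta_K}^{\ast}=p_K$ holds exactly (not merely within $\ve$), the conjugators lie in $\ker\Delta$, and both estimates of condition (2) hold simultaneously. With this twist supplied, the ``main obstacle'' you describe---arranging the two clauses of (2) at once so that a KMS-weight can be controlled---is already absorbed by the proof of Lemma \ref{lem:ce1} itself; without it, your argument stalls precisely where you placed it.
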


\subsection{Graphs of groups}
In this subsection,
we introduce the basic notations about graphs of groups and their fundamental groups.
We use the same notations and definitions as in \cite{Se} and \cite{H-P}. 

Let $X$ be a connected graph with a vertex set $V(X)$ and an edge set $E(X)\subset{V(X)\times V(X)}$.
For an edge $x\in E(X)$,
vertices $o(x)$ and $t(x)$ denote the origin and the terminus of $x$.
We write $y=\overline{x}$,
When edges $x$ and $y$ satisfy $o(x)=t(y)$ and $o(y)=t(x)$.
In this article,
every edge $x\in E(X)$ has $\overline{x}\in E(X)$ for every graph $X$.
An orientation $A$ of $X$ is a subset of $E(X)$ containing exactly one of $x, \overline{x}$.
When $X$ is a graph with an orientation $A$,
we define the function $e\colon E(X)\rightarrow\{0, 1\}$ as
\[
e(x):=
\begin{cases}
0&x\in A\\
1&\overline{x}\in A.
\end{cases}
\]

\begin{defn}[{\cite[Section 4]{H-P}}]
A graph of groups $(G, Y)$ consists of the following.
\begin{itemize}
\item A non-empty connected graph $Y$.
\item Two families of groups $(G_{P})_{P\in V(Y)}$ and $(G_{y})_{y\in E(Y)}$ with $G_{y}=G_{\overline{y}}$ for all $y\in E(Y)$.
\item A family of monomorphisms $\{\iota_{y}\colon G_{y}\rightarrow G_{t(y)}\}_{y\in E(Y)}$.
\end{itemize}
\end{defn}

\subsubsection{The groups $F(G, Y)$}
We define the group $F(G, Y)$ for a graph of groups $(G, Y)$ as follows.
(See Section 5.1 of \cite{Se}.)
Let $\Gamma$ be the free product of the vertex groups $(G_{P})_{P\in V(Y)}$ and the free group with basis $E(Y)$.
The group $F(G, Y)$ is the quotient of $\Gamma$ by the normal subgroup generated by
\[ y\overline{y}\quad
   \text{and}
   \quad y\iota_{y}(a)y^{-1}(\iota_{\overline{y}}(a))^{-1}
\]
for all $y\in E(Y)$ and $a\in G_y$.

\subsubsection{Words of type $c$}
Let $c=\{y_{1}, y_{2},\dots, y_{n}\}$ be a path of $Y$,
where $y_{1}, y_{2},\dots, y_{n}$ are edges of $Y$ with $t(y_{i})=o(y_{i+1})$.
For a sequence $\mu=(r_{0}, r_{1},\dots, r_{n})$ of elements $r_{0}\in G_{o(y_{1})}$ and $r_{i}\in G_{t(y_{i})}$,
the element 
\[r_{0}y_{1}r_{1}y_{2}\cdots y_{n}r_{n}\]
of $F(G, Y)$ is said to be associated with the word $(c, \mu)$ and denoted by $|c, \mu|$.
(See Definition 9 of \cite{Se}.)

A pair $(c, \mu)$ is called reduced if either of the following holds.
\begin{itemize}
\item The length $n$ of $c$ is not equal to $0$ and $r_{i-1}\notin\iota_{\overline{y_{i}}}(G_{y_{i}})$ for every index $i$ with $y_{i}=\overline{y_{i-1}}$.
\item The length $n=0$ and $r_{0}\neq0$.
\end{itemize}
When $(c, \mu)$ is reduced,
we have $|c,\mu|\neq e$.
(See Theorem 11 of \cite{Se}.)

\subsubsection{The fundamental groups $\pi_{1}(G, Y)$}
Let $P_{0}$ be an element of $V(Y)$.
The group $\pi_{1}(G, Y, P_{0})$ is a subgroup of $F(G, Y)$ generated by the set of elements $|c, \mu|$ of closed paths $c$ from $P_{0}$ to $P_{0}$.
This is called the fundamental group of  $(G, Y)$ at $P_{0}$.
(See section 5.1 of \cite{Se}.)

Let $T$ be a maximal subtree of $Y$.
The fundamental group $\pi_{1}(G, Y, T)$ of $(G, Y)$ at $T$ is the quotient of $F(G, Y)$ by the normal subgroup generated by the set of edges of $T$.
In $\pi_{1}(G, Y, T)$,
the element induced by $y\in E(Y)$ is denoted by $g_y$.
(See section 5.1 of \cite{Se}.)

The following proposition holds.

\begin{prop}[{\cite[Proposition 20]{Se}}]\label{prop;fg}
For any maximal subtree $T$ of $Y$ and $P_{0}\in V(Y)$,
the canonical inclusion $i_{P_0}\colon \pi_{1}(G, Y, P_{0})\rightarrow F(G, Y)$ and the canonical quotient map
$q_{T}\colon F(G, Y)\rightarrow \pi_{1}(G, Y, T)$ induce an isomorphism of $\pi_{1}(G, Y, P_{0})$ onto $\pi_{1}(G, Y, T)$.
\end{prop}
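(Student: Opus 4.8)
The plan is to construct an explicit inverse to the homomorphism $\psi:=q_{T}\circ i_{P_{0}}\colon \pi_{1}(G,Y,P_{0})\to\pi_{1}(G,Y,T)$. Since $T$ is a tree, for each vertex $P\in V(Y)$ there is a unique reduced path from $P_{0}$ to $P$ lying in $T$; I would let $\gamma_{P}\in F(G,Y)$ denote the associated element $|c,\mu|$ whose group-data $\mu$ consists only of identities, so that $\gamma_{P}$ is simply the product of the edges of this path and $\gamma_{P_{0}}=e$. The key property, which follows directly from uniqueness of paths in a tree, is that $\gamma_{t(y)}=\gamma_{o(y)}\,y$ holds in $F(G,Y)$ for every $y\in E(T)$, regardless of which side of $y$ contains $P_{0}$ (in the exceptional case one uses $\overline{y}=y^{-1}$, coming from the relation $y\overline{y}=e$).

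First I would define a map $\rho$ on the generators of $F(G,Y)$ by the conjugation formulas $\rho(g):=\gamma_{P}\,g\,\gamma_{P}^{-1}$ for $g\in G_{P}$ and $\rho(y):=\gamma_{o(y)}\,y\,\gamma_{t(y)}^{-1}$ for $y\in E(Y)$; each value is a product along a closed path based at $P_{0}$, hence lies in $\pi_{1}(G,Y,P_{0})$. The main work is to check that $\rho$ respects the two families of defining relators of $F(G,Y)$. The relator $y\overline{y}$ is handled immediately once one substitutes $o(\overline{y})=t(y)$ and $t(\overline{y})=o(y)$ and cancels the intermediate $\gamma$'s. For the relator $y\,\iota_{y}(a)\,y^{-1}(\iota_{\overline{y}}(a))^{-1}$ one computes, using $\iota_{y}(a)\in G_{t(y)}$, that $\rho(y)\rho(\iota_{y}(a))\rho(y)^{-1}=\gamma_{o(y)}\,y\,\iota_{y}(a)\,y^{-1}\,\gamma_{o(y)}^{-1}$, and then applies the same relation inside $F(G,Y)$ together with $\iota_{\overline{y}}(a)\in G_{o(y)}$ to identify this with $\rho(\iota_{\overline{y}}(a))$. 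Thus $\rho$ extends to a homomorphism $F(G,Y)\to\pi_{1}(G,Y,P_{0})$.

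Next I would observe that $\rho$ annihilates every edge of $T$: for $y\in E(T)$ the tree identity $\gamma_{t(y)}=\gamma_{o(y)}y$ gives $\rho(y)=\gamma_{o(y)}\,y\,\gamma_{t(y)}^{-1}=e$. Hence $\rho$ factors through the quotient map defining $\pi_{1}(G,Y,T)$, yielding a homomorphism $\overline{\rho}\colon\pi_{1}(G,Y,T)\to\pi_{1}(G,Y,P_{0})$. It then remains to check that $\overline{\rho}$ and $\psi$ are mutually inverse, which I would verify on generators. For $\psi\circ\overline{\rho}$, each generator of $\pi_{1}(G,Y,T)$ coming from $G_{P}$ or from an edge is fixed because the conjugating factors $\gamma_{P}$ are words in edges of $T$ and so become trivial under $q_{T}$. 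For $\overline{\rho}\circ\psi$ applied to a generator $|c,\mu|=r_{0}y_{1}r_{1}\cdots y_{n}r_{n}$ of $\pi_{1}(G,Y,P_{0})$, the inserted factors $\gamma_{P},\gamma_{P}^{-1}$ telescope along the path—since consecutive termini and origins agree and $\gamma_{P_{0}}=e$ at both ends—leaving $|c,\mu|$ itself.

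I expect the principal obstacle to be the bookkeeping in the well-definedness step, namely confirming that $\rho$ is compatible with the edge relators under all orientation conventions, since the formula for $\rho(y)$ must be simultaneously consistent with $\rho(\overline{y})=\rho(y)^{-1}$ and with the monomorphisms $\iota_{y},\iota_{\overline{y}}$ having the correct targets $G_{t(y)}$ and $G_{o(y)}$. Once this is settled, the triviality of $q_{T}(\gamma_{P})$ and the telescoping computation are routine, and both composites equal the identity on generators, completing the proof that $\psi$ is an isomorphism with inverse $\overline{\rho}$.
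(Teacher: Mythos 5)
Your proof is correct, and since the paper states this proposition without proof (citing \cite[Proposition 20]{Se}), the right comparison is with Serre's argument, which yours reproduces essentially verbatim: the inverse built from conjugation by the tree-geodesic elements $\gamma_{P}$, the verification on the two families of relators, the vanishing of $\rho$ on $E(T)$ via $\gamma_{t(y)}=\gamma_{o(y)}y$, and the telescoping computation are exactly the standard proof. All the delicate points (the case split according to which side of $y\in E(T)$ contains $P_{0}$, and that the image of $\rho$ lands in $\pi_{1}(G,Y,P_{0})$ because each generator maps to a word along a closed path at $P_{0}$) are handled correctly.
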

This proposition shows that the isomorphism classes of $\pi_{1}(G, Y, P_{0})$ and $\pi_{1}(G, Y, T)$ are independent of the choice of $P_{0}$ and $T$.
Thus we write $\pi_{1}(G, Y)$ instead of $\pi_{1}(G, Y, P_{0})$ or $\pi_{1}(G, Y, T)$ if no confusion arises.
When $c$ is a closed path of $Y$,
we have $q_{T}(|c, \mu|)\neq e$ for a reduced form $(c, \mu)$.
(See Corollary 3 of \cite{Se}.)

\begin{exa}[Amalgamated free products]
If a connected graph $Y$ consists of two vertices $\{P, Q\}$ and two edges $\{y, \overline{y}\}$,
then the fundamental group $\pi_{1}(G, Y)$ is isomorphic to the amalgamated free product $G_{P}\ast_{G_y}G_{Q}$ with respect to the inclusions $\iota_{y}$ and $\iota_{\overline{y}}$.
(See Section 1.2 \cite{Se} for the definition.)
In this case,
every element $g\in G_{P}\ast_{G_y}G_{Q}\setminus{\{e\}}$ has a form
\[
g=a_{1}a_{2}\cdots a_{n}c 
\quad\text{with}
\quad a_{i}\in (G_{P}\setminus{\iota_{y}(G_{y})})\cup(G_{Q}\setminus{\iota_{\overline{y}}(G_{y})}),
\quad c\in G_{y}.
\]
In addition,
the sequence $(a_{1}, a_{2},\dots, a_{n}, c)$ can be chosen as follows.
\begin{itemize}
\item When $n=0$, this $c$ is not equal to $e$.
\item
We have $a_{i+1}\in G_{P}\setminus{\iota_{y}(G_{y})}$ if and only if $a_{i}\in G_{Q}\setminus{\iota_{\overline{y}}(G_{y})}$ for any $i$.
\end{itemize}
Generally,
a sequence $(a_{1}, a_{2},\dots, a_{n}, c)$ of words of an amalgamated free product is said to be reduced,
if the above conditions hold.
Since every sequence of reduced words of $G_{P}\ast_{G_y}G_{Q}$ corresponds to a reduced form $(c, \mu)$ of $(Y, G)$,
an element $g\in G_{P}\ast_{G_y}G_{Q}$ induced by reduced words is not equal to $e$.
\end{exa}

\begin{exa}[HNN extensions]
If a graph $Y$ consists of one vertex $P$ and two edges of loops $\{y, \overline{y}\}$,
then the fundamental group $\pi_{1}(G, Y)$ is isomorphic to the HNN extension $G_{P}\ast_{\theta}$ with respect to the canonical isomorphism $\theta$ from $\iota_{y}(G_{y})$ to $\iota_{\overline{y}}(G_{y})$.
(See Section 1.4 of \cite{Se} for the definition.)
Every element $g\in G_{P}\ast_{\theta}\setminus{\{e\}}$ has the form
\[
g=a_{0}t^{\epsilon_1}a_{1}t^{\epsilon_2}a_{2}\cdots t^{\epsilon_n}a_{n}
\]
with $a_{i}\in G_{P}$, $\epsilon_{i}\in\{1,-1\}$, and the stable letter $t$ of the HNN extension.
We can take these $a_i$ and $\epsilon_{i}$ as follows.
\begin{itemize}
\item If $n=0$, then $a_{0}\neq e$.
\item If an index $i$ satisfies $(\epsilon_{i-1}, \epsilon_{i})=(1, -1)$, then $a_{i-1}\notin\iota_{y}(G_{y})$.
\item If an index $i$ satisfies $(\epsilon_{i-1}, \epsilon_{i})=(-1, 1)$, then $a_{i-1}\notin\iota_{\overline{y}}(G_{y})$.
\end{itemize}
Generally,
a sequence $(a_{0}, t^{\epsilon_1}, a_{1}, t^{\epsilon_2}, a_{2},\dots,  t^{\epsilon_n}, a_{n})$ of an HNN-extension is called reduced words if the above conditions hold.
For the same reasons as for amalgamated free products, if $g\in G_{P}\ast_{\theta}$ is induced by reduced words,
then $g\neq e$.
\end{exa}

\subsubsection{The universal covering}
Fix a maximal subtree $T$ of $Y$ and an orientation $A\subset{E(Y)}$ of $Y$.
For every $P\in V(Y)$, $G_{P}$ is a subgroup of $\pi_{1}(G, Y, T)$ in a natural way.
The universal covering $\tilde{X}:=\tilde{X}(G, Y, T)$ of $(G, Y, T)$ is a graph with a vertex set 
$V(\tilde{X}):=\bigsqcup_{p\in V(Y)}\pi_{1}(G, Y, T)/G_{P}$ and an edge set 
$E(\tilde{X}):=\bigsqcup_{y\in E(Y)}\pi_{1}(G, Y, T)/\iota_{y}(G_{y})$,
where
\[o(g\iota_{y}(G_{y}))=gg_{y}^{e(y)}G_{o(y)},\quad
   t(g\iota_{y}(G_{y}))=gg_{y}^{1-e(y)}G_{t(y)}
\]
for any $g\in \pi_{1}(G, Y, T)$ and $y\in E(Y)$.
This $\tilde{X}$ has an action of $\pi_{1}(G, Y, T)$ induced by the left multiplication.
It is known that the universal covering $\tilde{X}$ is a tree.
(See Theorem 12 of \cite{Se}.)

\section{C$^*$-simplicity and the Powers averaging property for conditional expectations}
In this section,
we suppose that $G$ is a totally disconnected locally compact group and $\sigma^{\varphi}$ is the modular flow.
First, we give a sufficient condition for \Cs-simplicity of $G$.

\begin{lem}\label{lem:ce1}
Let $(K_\nu)_{\nu\in N}$ be a decreasing net of compact open subsets of $G$ and $(A_\nu)_{\nu\in N}$ be an increasing net $(A_\nu)_{\nu\in N}$ of \Cs-subalgebras of $\rg(G)$ with $\bigcap_{\nu}K_{\nu}=\{e\}$ and $\overline{\bigcup_{\nu}A_\nu}=\rg(G)$.
If every conditional expectation $E_{\nu}:=E_{K_\nu}$ and averaging projection $p_{\nu}:=p_{K_\nu}$
satisfy the following conditions,
then $G$ is \Cs-simple.
 \begin{itemize}
\item[(1)] The averaging projection $p_{\nu}$ is in $A_\nu$.  
\item[(2)]  
          For any $\epsilon>0$ and self adjoint element $x\in A_\nu$,
          there are $g_{1}, g_{2},\dots, g_{n}\in G$ such that
          \[
             \norm{\frac{1}{n}\sum_{i=1}^{n}\lambda_{g_i}(x-E_{\nu}(x))\lambda_{g_i}^*}<\epsilon,
             \quad 
             \norm{p_{\nu}-\frac{1}{n}\sum_{i=1}^{n}\lambda_{g_i}p_{\nu}\lambda_{g_i}^{*}}<\epsilon.
          \]   
 \end{itemize}
Moreover,
if $C_{cc}(G)\subset{\bigcup_{\nu}A_\nu}$,
and each $g_{1}, g_{2},\dots, g_{n}$ can be taken in the kernel of the modular function on $G$,
then the Plancherel weight is a unique $\sigma^{\varphi}$-KMS-weight on $\rg(G)$ up to scalar multiple.
\end{lem}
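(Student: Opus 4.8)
Since $(K_\nu)$ is decreasing the projections $p_\nu$ increase and, as recalled in Section~2, form an approximate unit of $\rg(G)$. Hence it suffices to show that every non-zero closed two-sided ideal $I$ of $\rg(G)$ contains $p_\nu$ for cofinally many $\nu$: for such $\nu$ and any $x\in\rg(G)$ one has $xp_\nu\in I$ and $xp_\nu\to x$, whence $I=\rg(G)$. Fix $a\in I$ with $a\ge0$, $a\neq0$. Because $p_\nu\to1$ strictly, $a':=p_\nu a p_\nu$ is non-zero for all large $\nu$, and I will produce $p_\nu\in I$ for each such $\nu$. The key point is the compression: $E_\nu$ is a $\rg(K_\nu)$-bimodule map and $p_\nu$ compresses $\rg(K_\nu)$ onto $\IC p_\nu$, so $E_\nu(a')=p_\nu E_\nu(a)p_\nu=c\,p_\nu$ with $c\ge0$, and faithfulness of $E_\nu$ forces $c>0$. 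This is the main obstacle in this part: without first compressing, $E_\nu(a)$ carries a component orthogonal to $p_\nu$ inside $\rg(K_\nu)$ that condition~(2) cannot control, and compressing removes it so that $E_\nu(a')$ becomes an honest multiple of $p_\nu$.

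Now approximate $a$ by a self-adjoint $x_0\in A_\nu$ and set $x:=p_\nu x_0p_\nu\in A_\nu$ (using $p_\nu\in A_\nu$): then $\norm{a'-x}<\delta$ and, as above, $E_\nu(x)=c'p_\nu$ with $|c'-c|<\delta$. Applying condition~(2) to $x$ gives $g_1,\dots,g_n$ with $\norm{\Phi(x)-c'\Phi(p_\nu)}<\epsilon$ and $\norm{\Phi(p_\nu)-p_\nu}<\epsilon$, where $\Phi(y):=\tfrac1n\sum_i\lambda_{g_i}y\lambda_{g_i}^*$; since $\norm{\Phi(a')-\Phi(x)}\le\norm{a'-x}$, these combine to $\norm{\Phi(a')-c'p_\nu}<\delta+(1+c')\epsilon$. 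As closed two-sided ideals are invariant under left and right multiplication by the multipliers $\lambda_{g_i}\in M(\rg(G))$, we have $\Phi(a')\in I$, so $b:=\tfrac1{c'}p_\nu\Phi(a')p_\nu\in I\cap\bigl(p_\nu\rg(G)p_\nu\bigr)_+$ satisfies $\norm{p_\nu-b}<1$ once $\delta,\epsilon$ are small. Thus $b$ is invertible in the unital corner $p_\nu\rg(G)p_\nu$ and $p_\nu=b^{-1}b\in I$, which completes the argument.

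\textbf{Uniqueness of the KMS-weight.} Let $\psi$ be a $\sigma^{\varphi}$-KMS-weight. By Proposition~\ref{prop;unique} it suffices to prove $\psi(\lambda_g p_K)=0$ for every compact open $K$ and $g\in G\setminus K$. Since $\Delta|_K\equiv1$ the projection $p_K$ is $\sigma^{\varphi}$-fixed and $\sigma^{\varphi}_{-i}(\lambda_g)=\Delta(g)\lambda_g$, so the KMS identity $\psi(uv)=\psi(v\sigma^{\varphi}_{-i}(u))$ yields $\psi(\lambda_g p_K)=\Delta(g)\,\psi(p_K\lambda_g p_K)$, moving the computation into the finite-weight corner $p_K\rg(G)p_K\subset\fm_\psi$ (Proposition~\ref{prop;domain}). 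Choosing $\nu$ with $K_\nu\subseteq K$ one has $KgK\cap K_\nu=\emptyset$, hence $E_\nu(p_K\lambda_g p_K)=0$. Writing $w$ for the real or imaginary part of $p_K\lambda_g p_K\in C_{cc}(G)\subseteq\bigcup_\nu A_\nu$, it then suffices to show $\psi(w)=0$ for self-adjoint $w\in p_K\rg(G)p_K\cap A_\nu$ with $E_\nu(w)=0$. Apply condition~(2) to $w$ with the $g_i$ taken in $\ker\Delta$: then $\norm{\Phi(w)}<\epsilon$ and $\norm{\Phi(p_\nu)-p_\nu}<\epsilon$, and each $\lambda_{g_i}$ is $\sigma^{\varphi}$-fixed, so conjugation by $\lambda_{g_i}$ preserves $\psi$ and $\psi(w)=\psi(\Phi(w))$. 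Decomposing along $p_\nu$ and $1-p_\nu$ and using the KMS identity with $\sigma^{\varphi}(p_\nu)=p_\nu$, the cross terms vanish, giving
\[
\psi(\Phi(w))=\psi\bigl(p_\nu\Phi(w)p_\nu\bigr)+\psi\bigl((1-p_\nu)\Phi(w)(1-p_\nu)\bigr).
\]
The first summand lies in the corner $p_\nu\rg(G)p_\nu$ and is bounded by $\epsilon\,\psi(p_\nu)$ because $\norm{\Phi(w)}<\epsilon$ and $\psi(p_\nu)<\infty$.

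The delicate point is the second summand, since $1-p_\nu$ has infinite weight and the norm bound on $\Phi(w)$ says nothing about its $\psi$-value directly. Here I would use Cauchy--Schwarz for $\psi$ together with $w=p_\nu wp_\nu$: putting $a_i:=(1-p_\nu)\lambda_{g_i}p_\nu$ one gets $(1-p_\nu)\Phi(w)(1-p_\nu)=\tfrac1n\sum_i a_iwa_i^*$ and
\[
\bigl|\psi\bigl((1-p_\nu)\Phi(w)(1-p_\nu)\bigr)\bigr|\le\norm{w}\,\tfrac1n\sum_i\psi(a_ia_i^*)=\norm{w}\,\psi\bigl((1-p_\nu)\Phi(p_\nu)(1-p_\nu)\bigr).
\]
Invariance of $\psi$ under the $\lambda_{g_i}$ gives $\psi(\Phi(p_\nu))=\psi(p_\nu)$, while $p_\nu\Phi(p_\nu)p_\nu\ge(1-\epsilon)p_\nu$ together with the vanishing of cross terms forces $\psi((1-p_\nu)\Phi(p_\nu)(1-p_\nu))\le\epsilon\,\psi(p_\nu)$; this is precisely where the second estimate in condition~(2) is used. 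Combining the bounds yields $|\psi(w)|\le(1+\norm{w})\epsilon\,\psi(p_\nu)$, and letting $\epsilon\to0$ gives $\psi(w)=0$. Hence $\psi(\lambda_g p_K)=0$, and Proposition~\ref{prop;unique} shows $\psi$ is a scalar multiple of the Plancherel weight.
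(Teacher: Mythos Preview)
Your argument is correct. The \Cs-simplicity half matches the paper's proof almost exactly: both compress a positive element of the ideal by $p_\nu$, observe that $E_\nu$ of the compression is a positive scalar multiple of $p_\nu$, average via condition~(2), and invert in the corner $p_\nu\rg(G)p_\nu$. Your approximation step (choosing $x_0\in A_\nu$ close to $a$ and then working with $\Phi(a')\in I$) is slightly more explicit than the paper's one-line claim that $\bigl(\bigcup_\nu A_\nu\bigr)\cap I\neq 0$, but the mechanism is identical.

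For the KMS part your route differs from the paper's. The paper introduces the auxiliary element
\[
y=\frac{1}{n}\sum_j p_\nu\lambda_{g_j}^{*}p_\nu\lambda_{g_j}p_\nu,
\]
splits $\psi(x_1)=\psi\bigl((p_\nu-y)x_1\bigr)+\psi(yx_1)$, turns $\psi(yx_1)$ into $\psi\bigl(p_\nu\Phi(x_1)p_\nu\bigr)$ via the KMS identity, and controls $\psi\bigl((p_\nu-y)x_1\bigr)$ by Cauchy--Schwarz together with $|\psi(p_\nu-y)|<\epsilon\,\psi(p_\nu)$, obtaining a bound of order $\sqrt{\epsilon}$. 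You instead use the KMS identity once to get $\psi(w)=\psi(\Phi(w))$ directly (implicitly replacing the multiplier $\lambda_{g_i}$ by $u_i=\lambda_{g_i}p_\nu\in C_{cc}(G)$, which is legitimate since $w=p_\nu w p_\nu$ and $\sigma^{\varphi}(u_i)=u_i$), then decompose $\Phi(w)$ along $p_\nu$ and $1-p_\nu$ and bound the off-corner piece by $\|w\|\,\psi\bigl((1-p_\nu)\Phi(p_\nu)(1-p_\nu)\bigr)\le \|w\|\,\epsilon\,\psi(p_\nu)$. This is a cleaner bookkeeping that yields a linear-in-$\epsilon$ estimate and makes transparent exactly where the second inequality in condition~(2) enters; the paper's $y$-trick reaches the same conclusion but stays inside the corner $p_\nu\rg(G)p_\nu$ throughout and needs the extra Cauchy--Schwarz step. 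Both arguments rest on the same ingredients: $C_{cc}(G)\subset\fm_\psi$, $\sigma^{\varphi}$-invariance of $p_\nu$ and of $\lambda_{g_i}$ for $g_i\in\ker\Delta$, and Proposition~\ref{prop;unique}.
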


\begin{proof}
Let $I$ be a non-zero norm closed two-sided ideal of $\rg(G)$.
To prove \Cs-simplicity of $G$,
it suffices to show that the averaging projection $p_{\nu}$ is in $I$ for sufficiently large $\nu$,
since the net $(p_{\nu})_\nu$ gives approximate units of $\rg(G)$.

Since $\bigcup_{\nu}A_{\nu}\cap I\neq0$,
we can take a positive element $x\in\bigcup_{\nu}A_{\nu}\cap I$. There is $\nu_{0}\in N$ such that 
$p_{\nu}xp_{\nu}\neq0$ and $x\in A_{\nu}\cap I$ for any $\nu\geq\nu_0$.
Fix $\nu\geq\nu_0$ and we show $p_{\nu}\in I$.
We may assume $\norm{E_{\nu}(x)}=1$ and $x=p_{\nu}xp_{\nu}$ by replacing positive $x\in A_{\nu}\cap I$,
since $E_{\nu}$ is faithful.

Every element $y\in p_{\nu}C_{c}(G)p_{\nu}$ has the form $y=\sum_{s\in F}\alpha_{s}p_{\nu}\lambda_{s}p_{\nu}$ and $E_{\nu}(y)=\alpha_{e}$,
where $F$ is a finite subset of $G$ containing $e$ whose elements have mutually distinct $K_\nu$-double cosets and  $\alpha_{s}\in\IC$. (See Proof of Theorem 3.1 of \cite{S2}.)
Since $x$ is approximated by positive elements $y\in p_{\nu}C_{c}(G)p_{\nu}$ with $\norm{E_{\nu}(y)}=1$,
we have $E_{\nu}(x)=p_\nu$.

By using the assumption of the lemma, for $\epsilon>0$ we chose $g_{1}, g_{2},\dots, g_{n}\in G$ satisfying
\[
       \norm{\frac{1}{n}\sum_{i=1}^{n}\lambda_{g_i}(x-E_{\nu}(x))\lambda_{g_i}^*}<\epsilon,
       \quad 
       \norm{p_{K}-\frac{1}{n}\sum_{i=1}^{n}\lambda_{g_i}p_{\nu}\lambda_{g_i}^{*}}<\epsilon.
\]   
Thus
\begin{align*}
\quad\norm{p_{\nu}-\frac{1}{n}\sum_{i=1}^{n}\lambda_{g_i}x\lambda_{g_i}^*}
<2\epsilon.
\end{align*}
When $\epsilon<\frac{1}{2}$,
the element $\sum_{i=1}^{n}p_{\nu}\lambda_{g_i}x\lambda_{g_i}^{*}p_{\nu}$ of $I$ is invertible in $p_{\nu}\rg(G)p_{\nu}$.
We get $p_{\nu}\in I$.

Next,
we show the uniqueness of a $\sigma^{\varphi}$-KMS-weight.
Let $\psi$ be a $\sigma^{\varphi}$-KMS-weight,
$L$ be a compact open subgroup of $G$,
and $s\in G\setminus L$.
By Proposition \ref{prop;unique}, it suffices to show that $\psi(\lambda_{s}P_{L})=0$.
We prove this equation in the same way as in the proof of Theorem 4.1 in \cite{S2}.
Set $x_1:=p_{L}\lambda_{s}p_{L}+p_{L}\lambda_{s^{-1}}p_{L}$.
We can take $\nu\in N$ with $K_\nu\subset{L}$ and $x_{1}\in A_\nu$.
For $\epsilon>0$,
we have $g_{1}, g_{2},\dots, g_{n}$ in the kernel of the modular function with
 \[
      \norm{\frac{1}{n}\sum_{j=1}^{n}\lambda_{g_j}x_{1}\lambda_{g_j}^*}<\epsilon,
      \quad 
      \norm{p_{K}-\frac{1}{n}\sum_{j=1}^{n}\lambda_{g_j}p_{\nu}\lambda_{g_j}^{*}}<\epsilon,
 \]   
since $E_{K}(x_1)=0$.

Set
\[
 y:=\frac{1}{n}\sum_{j=1}^{n}p_{\nu}\lambda_{g_j}^{*}p_{\nu}\lambda_{g_j}p_{\nu}\in p_{\nu}C_{c}(G)p_{\nu},
\]
then 
\[
\left|\psi\left(p_{\nu}-y\right)\right|
=\left|\psi\left(p_{\nu}-\frac{1}{n}\sum_{j=1}^{n}p_{\nu}\lambda_{g_j}p_{\nu}\lambda_{g_j}^{*}p_{\nu}\right)\right|
<\psi(p_{\nu})\epsilon.
\]
The equation above follows from the KMS-condition of $\psi$,
since $\sigma^{\varphi}_{-i}(p_{\nu}\lambda_{g_j}p_{\nu})=p_{\nu}\lambda_{g_j}p_{\nu}$.
Similarly,
\[
\left|\psi(yx_{1})\right|
=\left|\psi\left(\frac{1}{n}\sum_{j=1}^{n}p_{\nu}\lambda_{g_j}x_{1}\lambda_{g_j}^{*}p_{\nu}\right)\right|
<\psi(p_{\nu})\epsilon.
\]

Therefore, we get
\begin{align*}
\left|\psi(x_{1})\right|
&\leq\left|\psi\left((p_{\nu}-y\right)x_{1})\right|+\left|\psi(yx_{1})\right|\\
&\leq\sqrt{\psi\left(p_{\nu}-y\right)}
\sqrt{\psi\left(x_{1}^{*}\left(p_{\nu}-y\right)x_{1}\right)}+\left|\psi(yx_{1})\right|\\
&<2\psi(p_{\nu})\sqrt{\epsilon}+\psi(p_{\nu})\epsilon,
\end{align*}
since $p_{\nu}-y\geq0$ and $\psi$ is a positive functional on $p_{\nu}\rg(G)p_{\nu}$.
The above inequality holds for every $\epsilon>0$,
then $\psi(x_{1})=\psi(p_{L}\lambda_{s}p_{L}+p_{L}\lambda_{s^{-1}}p_{L})=0$.
Similarly,
we have $\psi(ip_{L}\lambda_{s}p_{L}-ip_{L}\lambda_{s^{-1}}p_{L})=0$,
then $\psi(\lambda_{s}P_{L})=\psi(p_{L}\lambda_{s}p_{L})=0$.
\end{proof}

\begin{exa}
When $G$ is a \Cs-simple group in Proposition \ref{prop;elementary},
the averaging projection $p_{K_n}$ is a central projection of $\rg(L_{n})$ with $\rg(L_{n})p_{K_n}\cong{\rg(L_{n}/K_{n})}$.
By this isomorphism, the canonical trace of $\rg(L_{n}/K_{n})$ corresponds to the restriction of $E_{K_n}$ on $\rg(L_{n})p_{K_n}$.
Since $L_{n}/K_{n}$ is a discrete \Cs-simple group,
for any $x\in\rg(L_{n})p_{K_n}$ and $\epsilon>0$ there are $g_{1}, g_{2},\dots, g_{m}\in L_{n}$ such that the inequality
\[
\norm{\frac{1}{m}\sum_{j=1}^{m}\lambda_{g_j}(x-E_{K_n}(x))\lambda_{g_j}^*}
=\norm{\frac{1}{m}\sum_{j=1}^{m}\lambda_{g_j}x\lambda_{g_j}^*-E_{K_n}(x)}<\epsilon
\] 
holds, by Theorem 4.5 in \cite{Haa}.

The increasing union $\bigcup_{n}\rg(L_{n})p_{K_n}$ is a dense $\ast$-subalgebra of $\rg(G)$ containing $C_{cc}(G)$ and $G$ is unimodular.
Thus $G$ satisfies the assumptions of Lemma \ref{lem:ce1}.
\end{exa}

\begin{exa}
Suppose that the discrete group $\Lambda$ and the \Cs-simple group $\cG:= \Lambda \rtimes G$ for a totally disconnected group $G$ are defined as in Theorem \ref{prop;non-elementary}.
Let $K$ be a compact open subgroup of $G$,
and $g_{1}, g_{2},\dots, g_{l}$ be elements in $\cG\setminus{K}$.
Since $\cG=\bigcup_{n}\Lambda_{n}\rtimes G$,
we may assume $g_{1}, g_{2},\dots, g_{l}\in\Lambda_{N-1}\rtimes G$ for some $N$.
Each $g_{i}$ has the form $g_{i}=s_{i}h_{i}$,
where $s_{i}$ is in $\Lambda_{N-1}$,
$h_{i}$ is in $G$.
Let $(\delta_{gK})_{gK\in G/K}$ be the canonical generators of $\bigoplus_{G/K} \IZ_2\leq\Upsilon_{N}=\bigoplus_{K\in\Omega}\bigoplus_{G/K} \IZ_2$.
For every $i$,
we have
$\delta_{K}g_{i}\delta_{K}^{-1}=s_{i}\delta_{K}\delta_{h_{i}K}^{-1}h_{i}$ and
$s_{i}^{\prime}:=s_{i}\delta_{K}\delta_{h_{i}K}^{-1}\neq e$
because every $i$ satisfies either $s_{i}\neq e$ or $h_{i}\notin K$.
Since $\Lambda$ has the subgroup $\Gamma_{N}\ast\Xi_{N}\ast\Xi_{N+1}$ and $s_{i}^{\prime}\in\Gamma_{N}$,
for any $\epsilon>0$ there are $t_{1}, t_{2},\dots, t_{m}\in\Xi_{N}\ast\Xi_{N+1}$ such that
$\norm{\frac{1}{m}\sum_{j=1}^{m}\lambda^{\Lambda}_{t_{j}s_{i}^{\prime}t_{j}^{-1}}}<\frac{\epsilon}{l}$ for all $i$
(see Proof of Lemma 1.2 of \cite{P-S}),
where $\lambda^{\Lambda}$ is the left regular representation of $\Lambda$.
The Hilbert space $L^{2}(\cG)$ is isomorphic to $l^{2}(\Lambda)\otimes L^{2}(G)$ and the restriction of $\lambda^{\cG}$ on $\Lambda$ is unitary equivalent to $\lambda^{\Lambda}\otimes 1$.
Since $(t_{j})_{j}$ and $\delta_{K}$ commute with $K$,
they are in the kernel of the modular function.
Moreover, we get
\[\norm{\frac{1}{m}\sum_{j}^{m}\lambda^{\cG}_{t_{j}\delta_{K}}\right(\sum_{i}^{l}p_{K}\lambda^{\cG}_{g_{i}}p_{K}\left)({\lambda_{t_{j}\delta_{K}}^{\cG})^{\ast}}}
=\norm{\sum_{i}^{l}p_{K}(\frac{1}{m}\sum_{j=1}^{m}\lambda^{\cG}_{t_{j}s_{i}^{\prime}t_{j}^{-1}})\lambda^{\cG}_{h_i}p_{K}}<\epsilon\] and
\[p_{K}=\frac{1}{m}\sum_{j}^{m}\lambda^{\cG}_{t_{j}\delta_{K}}p_{K}\lambda^{\cG\ast}_{t_{j}\delta_{K}}.\]
Therefore,
$\cG$ satisfies the assumption of Lemma \ref{lem:ce1} for the net $(K_{\nu})_{\nu}$ of all compact open subgroups of $G$ and the net $(p_{K_\nu}\rg(\cG)p_{K_\nu})_\nu$ of \Cs-subalgebras of $\rg(\cG)$.
\end{exa}

\section{Generalized Baumslag-Solitar groups and their completions}
In this section,
let $(G, Y)$ be a graph of groups with a connected graph $Y$, vertex groups $\{G_{P}\}_{P\in V(Y)}$, edge groups $\{G_{y}\}_{y\in E(Y)}$, and monomorphisms $\{\iota_{y}\colon G_{y}\rightarrow G_{t(y)}\}_{y\in E(Y)}$.
We suppose that $\theta_{y}$ is a canonical isomorphism from $\iota_{y}(G_{y})$ to $\iota_{\overline{y}}(G_{y})$ for any $y\in E(Y)$.
Let $y$ be an edge of $Y$ and $W$ be a subgraph of $Y$ with $E(W):=E(Y)\setminus{\{y,\overline{y}\}}$ and $V(W):=V(Y)$.
Since $Y$ is connected,
$W$ is either connected or decomposed into a disjoint union of two nonempty connected graphs $W_1$ and $W_2$.
\begin{lem}\label{lem:subgraph}
The following statements hold.
\begin{itemize}
\item[(a)] 
If  $W$ is connected,
then $\pi_{1}(G, Y)$ is isomorphic to the HNN extension $\pi_{1}(G|_{W}, W)\ast_{\theta_y}$.
Where $(G|_{W}, W)$ is a graph of groups with edge groups $\{G_{w}\}_{w\in E(W)(=E(Y)\setminus{\{y,\overline{y}\}})}$ and vertex groups $\{G_{P}\}_{P\in V(W)(=V(Y))}$.
\item[(b)]
If $W$ is disconnected,
then $\pi_{1}(G, Y)$ is isomorphic to the amalgamated free product $\pi_{1}(G|_{W_{1}}, W_{1})\ast_{G_{y}}\pi_{1}(G|_{W_{2}}, W_{2})$.
Where $(G|_{W_{i}}, W_{i})$ is a graph of groups with edge groups $\{G_{w}\}_{w\in E(W_{i})(\subset{E(Y)})}$ and vertex groups $\{G_{P}\}_{P\in V(W_{i})(\subset{V(Y)})}$.
\end{itemize}
\end{lem}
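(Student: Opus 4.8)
The plan is to reduce both statements to manipulations of the standard presentation of $\pi_{1}(G, Y, T)$, exploiting the freedom to choose the maximal subtree $T$ guaranteed by Proposition \ref{prop;fg}. Recall that $\pi_{1}(G, Y, T)$ is generated by the vertex groups $(G_{P})_{P\in V(Y)}$ together with symbols $g_{z}$, $z\in E(Y)$, subject to $g_{\overline{z}}=g_{z}^{-1}$, to $g_{z}=e$ for $z\in E(T)$, and to $g_{z}\iota_{z}(a)g_{z}^{-1}=\iota_{\overline{z}}(a)$ for all $z\in E(Y)$ and $a\in G_{z}$. The idea in each case is to choose $T$ so that the distinguished pair $\{y,\overline{y}\}$ plays a special role, and then to read off the claimed decomposition directly from the presentation.

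For (a), since $W$ is connected and $V(W)=V(Y)$, a maximal subtree $T$ of $W$ is also a maximal subtree of $Y$, and I would fix such a $T$; note that $y\notin E(T)$. Deleting the generator $g_{y}$ and its relation from the presentation above yields exactly the presentation of $\pi_{1}(G|_{W}, W, T)$, whose generators are the vertex groups and the $g_{z}$ with $z\in E(W)$. Reinstating $g_{y}$ as a new generator $t:=g_{y}$ subject only to $t\iota_{y}(a)t^{-1}=\iota_{\overline{y}}(a)=\theta_{y}(\iota_{y}(a))$ is precisely the formation of the HNN extension with stable letter $t$ associating $\iota_{y}(G_{y})\le G_{t(y)}$ to $\iota_{\overline{y}}(G_{y})\le G_{o(y)}$ through $\theta_{y}$. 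This gives the asserted isomorphism $\pi_{1}(G, Y)\cong \pi_{1}(G|_{W}, W)\ast_{\theta_{y}}$.

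For (b), the edge $y$ is a bridge: say $o(y)\in V(W_{1})$ and $t(y)\in V(W_{2})$. I would choose maximal subtrees $T_{1}, T_{2}$ of $W_{1}, W_{2}$ and set $T:=T_{1}\cup\{y,\overline{y}\}\cup T_{2}$, which is a maximal subtree of $Y$; now $y\in E(T)$, so $g_{y}=e$. The relation $g_{y}\iota_{y}(a)g_{y}^{-1}=\iota_{\overline{y}}(a)$ collapses to $\iota_{y}(a)=\iota_{\overline{y}}(a)$, identifying the copy of $G_{y}$ sitting inside $G_{t(y)}\le \pi_{1}(G|_{W_{2}}, W_{2}, T_{2})$ via $\iota_{y}$ with the copy inside $G_{o(y)}\le \pi_{1}(G|_{W_{1}}, W_{1}, T_{1})$ via $\iota_{\overline{y}}$. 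Since the remaining generators and relations split cleanly into those coming from $W_{1}$ and those from $W_{2}$, the resulting group is the amalgamated free product $\pi_{1}(G|_{W_{1}}, W_{1})\ast_{G_{y}}\pi_{1}(G|_{W_{2}}, W_{2})$ over these two embeddings of $G_{y}$.

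The routine but essential bookkeeping is to confirm that the subgroups being associated or amalgamated genuinely inject into the relevant factors, so that the HNN and amalgamated-product constructions are non-degenerate; this is immediate because the edge and vertex groups of $(G|_{W}, W)$ and of $(G|_{W_{i}}, W_{i})$ are literally those of $(G, Y)$ and the monomorphisms $\iota_{z}$ are unchanged. The only point requiring genuine care is the choice of subtree: one must check in (a) that a spanning tree of $W$ remains maximal in $Y$, which holds since $V(W)=V(Y)$, and in (b) that adjoining the bridge $y$ to disjoint maximal subtrees of $W_{1}, W_{2}$ yields a spanning tree of $Y$ with no cycle, which holds since $W_{1}, W_{2}$ are exactly the two components of $W$. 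Granting these choices, both isomorphisms are Tietze transformations of presentations, and Proposition \ref{prop;fg} ensures that the conclusion does not depend on the auxiliary data $T$, $T_{1}$, $T_{2}$.
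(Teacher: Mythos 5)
Your proof is correct, but it takes a genuinely different route from the paper. The paper works one level up, with the universal groups $F(G,Y)$: in case (a) it invokes Lemma 6 of Serre's \emph{Trees} to get a natural isomorphism $F(G,Y)\cong F(G|_{W},W)\ast_{\theta_{y}}$ and then descends to fundamental groups via a commutative diagram and Proposition \ref{prop;fg}; in case (b) it builds an auxiliary two-vertex graph of groups $(H,W')$ with vertex groups $F(G|_{W_{1}},W_{1})$ and $F(G|_{W_{2}},W_{2})$, and must then identify a subgroup $F'\le F(H,W')$ (isomorphic to the fundamental group of a further auxiliary graph of groups $(H',W')$) on which the composed quotient map restricts to an isomorphism, in order to conclude that the induced map $\overline{g}$ is an isomorphism. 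You instead fix a maximal subtree adapted to the distinguished edge --- a spanning tree of $W$ in case (a), $T=T_{1}\cup\{y,\overline{y}\}\cup T_{2}$ in case (b) --- and read both decompositions off the defining presentation of $\pi_{1}(G,Y,T)$ by Tietze transformations, with Proposition \ref{prop;fg} guaranteeing independence of the tree; this is shorter and avoids the auxiliary graph of groups and the subgroup $F'$ entirely, and your verifications that the chosen trees are indeed maximal subtrees of $Y$ are the right places to put the care. One small correction to your closing remark: the non-degeneracy you need is not that the $\iota_{z}$ remain monomorphisms into the vertex groups (which is trivially inherited), but that the vertex groups $G_{o(y)}$ and $G_{t(y)}$ inject into $\pi_{1}(G|_{W},W)$, respectively $\pi_{1}(G|_{W_{i}},W_{i})$, so that the presentations you obtain really are the HNN extension and amalgam in the structured sense; this is the standard embedding theorem for fundamental groups of graphs of groups (Serre, \emph{Trees}, Section 5.2, consequences of Theorem 11) and should be cited, though it is exactly the sort of fact the paper's $F(G,Y)$-level argument absorbs into Serre's formalism. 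With that citation added, your argument is complete and arguably more transparent than the paper's, particularly in case (b).
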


\begin{proof}
First we suppose $W$ is connected.
By Lemma 6 of \cite{Se},
there is the natural isomorphism $f\colon F(G, Y)\rightarrow F(G|_{W}, W)\ast_{\theta_{y}}$. 
Put $P_0:=o(y)$ and take a maximal subtree $T$ of $Y$ with $y\notin E(T)$,
then the commutative diagram
\[
\xymatrix{
\pi_{1}(G, Y, P_{0})\ar@<-0.2ex>@{^{(}->}[r]^-{i_{Y}}\ar[d]_-{f|}
&F(G, Y)\ar@{->>}[r]^-{q_{Y}}\ar[d]_-{f}
&\pi_{1}(G, Y, T)\ar[d]_{\overline{f}}\\
\pi_{1}(G|_{W}, W, P_{0})\ast_{\theta_{y}}\ar@<-0.2ex>@{^{(}->}[r]^-{(i_{W})\ast_{\theta_y}}
&F(G|_{W}, W)\ast_{\theta_{y}}\ar@{->>}[r]^-{(q_{W})\ast_{\theta_y}}
&\pi_{1}(G|_{W}, W, T)\ast_{\theta_{y}}
}
\]
holds.
Where $i_{Y}, i_{W}$ are the canonical injections and $q_{Y}, q_{W}$ are the canonical quotient maps as in Proposition \ref{prop;fg}.
By Proposition \ref{prop;fg}, $q_{1}\circ i_{1}$ and $q_{2}\circ i_{2}$ are isomorphism.
Since $f$ is isomorphism,
the induced maps $f|$ and $\overline{f}$ are isomorphism.

Next,
Let $W$ be disconnected.
We define a graph of groups $(H, W^{\prime})$ as follows.
The graph $W^{\prime}$ consists of two edges $\{y, \overline{y}\}$ and two vertices $\{P_{1}:=o(y), P_{2}:=t(y)\}$.
The edge group and the vertex groups of $(H, W^{\prime})$ are  defined as
\[
 H_{y}:=G_{y},
\quad H_{P_{1}}:=F(G|_{W_{1}}, W_{1}),
\quad H_{P_{2}}:=F(G|_{W_{2}}, W_{2}).
\] 
By using Lemma 6 of \cite{Se} repeatedly,
we get the natural isomorphism $g\colon F(G, Y)\rightarrow F(H, W^{\prime})$.
Fix a maximal subtree $T$ of $Y$, 
then $T_{1}:=T\cap Y_{1}$ and $T_{2}:=T\cap Y_{2}$ are the maximal subtrees of $Y_1$ and $Y_2$.
Since $\pi_{1}(H, W^{\prime}, W^{\prime})$ is isomorphic to $F(G|_{W_{1}}, W_{1})\ast_{G_y}F(G|_{W_{2}}, W_{2})$,
there is a commutative diagram
\[
\xymatrix{
F(G, Y)\ar@{->>}[r]^-{q_{Y}}\ar[d]_-{g}
&\pi_{1}(G, Y, T)\ar[rd]^-{\overline{g}}\\
F(H, W^{\prime})\ar@{->>}[r]^-{q_{W^{\prime}}}
&F(G|_{W_{1}}, W_{1})\ast_{G_y}F(G|_{W_{2}}, W_{2})\ar@{->>}[r]^-{q}
&\pi_{1}(G|_{W_{1}}, W_{1}, T_{1})\ast_{G_y}\pi_{1}(G|_{W_{2}}, W_{2}, T_{2})
}
\]
with $q:=q_{W_1}\ast_{G_y}q_{W_2}$.
Define a subgroup $F^{\prime}$ of $F(H, W^{\prime})$ as follows.
\[
F^{\prime}:=\left\{|c, \mu|\in F(H, W^{\prime})\middle|
\begin{array}{l}
(c, \mu)\text{ is a word of }(H, W^{\prime}).\\
c\text{ is a closed  path of }W^{\prime}\text{ begin with }P_{1}.\\
\mu\text{ is a sequence of }\pi_{1}(G|_{W_{1}}, W_{1}, P_{1})\cup\pi_{1}(G|_{W_{2}}, W_{2}, P_{2}).
\end{array}
\right\}
\]
This $F^\prime$ satisfies $g(\pi_{1}(G, Y, P_{1}))\subset{F^{\prime}}$ and the restriction of $q\circ q_{W^\prime}$ on $F^{\prime}$ is an isomorphism.
That is because $F^\prime$ is isomorphic to the fundamental group of the graph of groups $(H^{\prime}, W^{\prime})$ whose vertex groups and edge groups are as follows.
\[
 H^{\prime}_{y}:=G_{y}.
\quad H^{\prime}_{P_{1}}:=\pi_{1}(G|_{W_{1}}, W_{1}, P_{1}).
\quad H^{\prime}_{P_{2}}:=\pi_{1}(G|_{W_{2}}, W_{2}, 2_{2}).
\] 
Since the commutative diagram
\[
\xymatrix{
\pi_{1}(G, Y, P_{0})\ar@<-0.2ex>@{^{(}->}[r]^-{i_{Y}}\ar[d]_-{g|}
&F(G, Y)\ar@{->>}[r]^-{q_{Y}}\ar[d]_-{g}
&\pi_{1}(G, Y, T)\ar[d]_{\overline{g}}\\
F^{\prime}\ar@<-0.2ex>@{^{(}->}[r]
&F(H, W^{\prime})\ar@{->>}[r]^-{q\circ q_{W^\prime}}
&\pi_{1}(G|_{W_{1}}, W_{1}, T_{1})\ast_{G_y}\pi_{1}(G|_{W_{2}}, W_{2}, T_{2})
}
\]
holds,
$\overline{g}$ is an isomorphism.
\end{proof}

\begin{defn}\label{defn}
The fundamental group $\pi_{1}(G, Y)$ is called a generalized Baumslag-Solitar group if the edge groups $\{G_{y}\}_{y\in E(Y)}$ and the vertex groups $\{G_{P}\}_{P\in V(Y)}$ are isomorphic to $\IZ$.
\end{defn}

Suppose that $(G, Y)$ is a graph of groups whose edge groups and vertex groups are isomorphic to $\IZ$ and $Y$ consists of one vertex $P$ and two edges $y, \overline{y}$.
If $[G_{P}:\iota_{\overline{y}}(G_{y})]=n$ and $[G_{o(y)}:\iota_{y}(G_{y})]=m$,
then the generalized Baumslag-Solitar group $\pi_{1}(G, Y)$ is isomorphic to the Baumslag-Solitar group BS$(n, m)$.

Let $(G, Y)$ be a general graph of groups whose edge groups and vertex groups are isomorphic to $\IZ$.
Suppose $T$ is a maximal subtree of $Y$.
For a geodesic path $c\subset{T}$ from $P$ to $Q$,
we define the integer $k_{c}$ as follows.
Suppose that $c:=(y_{1}, y_{2},\dots, y_{l})$, $[G_{o(y_{i})}:\iota_{\overline{y_{i}}}(G_{y_{i}})]=n_{i}$ and $[G_{t(y_{i})}:\iota_{y_{i}}(G_{y_{i}})]=m_{i}$.
Set
$k_{1}:=m_{1}$ and $k_{i}:=\frac{k_{i-1}m_{i}}{(k_{i-1}, n_{i})}$
for any $1\leq i\leq l$.
Where $(k_{i-1}, n_{i})$ is the greatest common divisor of $k_{i-1}$ and $n_{i}$.
We put $k_{c}:=k_{l}$ for a geodesic path $c$ with the length $l\neq0$ and $k_{c}:=1$ for a path $c$ with   the length $0$.
By a direct computation,
we get $[G_{t(y_{l})}:G_{t(y_{l})}\cap G_{o(y_{1})}]=k_{c}$ and $[G_{o(y_{1})}:G_{t(y_{l})}\cap G_{o(y_{1})}]=k_{\overline{c}}$ in $\pi_{1}(G, Y, T)$.
Where $\overline{c}:=(\overline{y}_{l}, \overline{y}_{l-1},\dots, \overline{y}_{1})$.

In addition,
if $y$ is an edge of $Y$ with $[G_{o(y)}:\iota_{\overline{y}}(G_{y})]=n$ and $[G_{t(y)}:\iota_{y}(G_{y})]=m$,
then there is the geodesic path $c\subset{T}$ from $o(y)$ to $t(y)$.
The relative order $[G_{o(y)}:\iota_{\overline{y}}(G_{y})\cap\iota_{y}(G_{y})]$ in $\pi_{1}(G, Y, T)$ can be compute as the least common multiple $k^{\prime}_{\overline{y}}$ of $n$ and $\frac{k_{\overline{c}}m}{(m, k_{c})}$.
We define $\kappa_{\overline{y}}:=\frac{k^{\prime}_{\overline{y}}}{n}$,
then we have $[\iota_{\overline{y}}(G_{y}):\iota_{\overline{y}}(G_{y})\cap\iota_{y}(G_{y})]=\kappa_{\overline{y}}$ in $\pi_{1}(G, Y, T)$.

By using above notations, we state our main theorem as follows.

\begin{thm}\label{prop:GBS}
Let $T$ be a maximal subtree of $Y$ and $\overline{\pi_{1}(G, Y, T)}$ be the closure of $\pi_{1}(G, Y, T)$ 
in the automorphism group of the universal covering $\tilde{X}(G, Y, T)$.
If $(G, Y, T)$ satisfies the following conditions,
then $\overline{\pi_{1}(G, Y, T)}$ is a non-discrete locally compact \Cs-simple group. 
\begin{itemize}
\item The graph $Y$ is not a tree.
\item Every edge groups and vertex groups of $(G, Y)$ are isomorphic to $\IZ$.
\item There is an edge $y\in E(Y)\setminus E(T)$ such that $\kappa_{\overline{y}}\neq\kappa_{y}$.
\item For any edge $z$ of $Y$, $\iota_{z}(G_{z})$ is a proper subgroup of $G_{t(z)}$.
\end{itemize}
Moreover, the reduced group \Cs-algebra $\rg(\overline{\pi_{1}(G, Y, T)})$ has a unique KMS-weight with respect to the modular flow.
\end{thm}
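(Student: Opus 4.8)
The plan is to verify that the locally compact group $G:=\overline{\pi_{1}(G,Y,T)}$ satisfies the hypotheses of Lemma~\ref{lem:ce1}, which would immediately yield both \Cs-simplicity and uniqueness of the KMS-weight. The first task is to understand the group $G$ concretely: since $\pi_{1}(G,Y,T)$ acts on its Bass--Serre tree $\tilde{X}$ with $\IZ$ vertex and edge stabilizers, the closure $G$ in $\mathrm{Aut}(\tilde{X})$ is a totally disconnected locally compact group whose compact open subgroups are the closures of vertex (or geodesic-ball) stabilizers. I would fix a base vertex $v_0$ and let $K_\nu$ be the pointwise stabilizer in $G$ of the ball of radius $\nu$ around $v_0$; these form a decreasing net of compact open subgroups with $\bigcap_\nu K_\nu=\{e\}$, as required. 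For the algebras $A_\nu$ I would take $p_{K_\nu}\rg(G)p_{K_\nu}$ (or the increasing union thereof), so that condition~(1) of Lemma~\ref{lem:ce1} holds automatically and $\overline{\bigcup_\nu A_\nu}=\rg(G)$.

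\textbf{The averaging step.}
The heart of the argument is condition~(2): given a self-adjoint $x\in A_\nu$, I must produce group elements $g_1,\dots,g_n$ that simultaneously average $x-E_\nu(x)$ to something small in norm while nearly fixing the projection $p_\nu$. This is where the tree combinatorics and the hypothesis $\kappa_{\overline y}\neq\kappa_y$ enter. The condition $\kappa_{\overline y}\neq\kappa_y$ forces the edge $y\in E(Y)\setminus E(T)$ to act hyperbolically on $\tilde X$ with an asymmetry that makes $G$ genuinely non-discrete and supplies hyperbolic isometries whose translation lengths can be chosen large. Following Raum's treatment of $\mathrm{BS}(n,m)$, I would use the structure from Lemma~\ref{lem:subgraph}, which expresses $\pi_1(G,Y)$ as an HNN extension (case (a)) or amalgamated free product (case (b)) over $G_y\cong\IZ$; in either case one obtains elements acting with unboundedly many disjoint translates of a given compact set on the tree. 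Conjugating $x-E_\nu(x)$ by a sequence of such elements moves its ``off-diagonal'' support into mutually orthogonal regions, so a Powers-type averaging estimate drives the norm below $\epsilon$. The condition that every $\iota_z(G_z)$ is proper in $G_{t(z)}$ guarantees the tree has no leaves and branching everywhere, which is what makes the regular representation large enough for the orthogonality to take effect.

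\textbf{The modular refinement.}
For the KMS-uniqueness half, I additionally need $C_{cc}(G)\subset\bigcup_\nu A_\nu$, which is immediate from the choice $A_\nu=p_{K_\nu}\rg(G)p_{K_\nu}$, and I must check that the averaging elements $g_j$ can be chosen in the kernel of the modular function $\Delta$. The modular function of $G$ is determined by how hyperbolic isometries scale the Haar measure along the tree, i.e.\ by the ratios $\kappa_y,\kappa_{\overline y}$ of edge-stabilizer indices. The key observation is that, although $G$ is non-unimodular precisely because $\kappa_{\overline y}\neq\kappa_y$, the elements needed for averaging can be taken among those commuting with (or stabilizing) the relevant compact open subgroup, which lie in $\ker\Delta$; this is exactly the mechanism used in the two worked examples after Lemma~\ref{lem:ce1}. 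With both conditions verified, Proposition~\ref{prop;unique} gives $\psi(\lambda_s p_L)=0$ for all $s\notin L$, hence uniqueness of the Plancherel weight.

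\textbf{Main obstacle.}
I expect the genuine difficulty to lie in the averaging estimate of condition~(2), specifically in producing the hyperbolic elements with the required disjointness of translates \emph{inside the closure} $G$ rather than in $\pi_1(G,Y,T)$ itself, and in controlling the double-coset combinatorics of $K_\nu\backslash G/K_\nu$ well enough that conjugation genuinely annihilates the off-diagonal part. The inequality $\kappa_{\overline y}\neq\kappa_y$ must be leveraged not just to get non-discreteness but to ensure enough length-changing isometries exist to separate supports; translating this geometric input into the operator-norm bound, uniformly over the finitely supported $x\in p_{K_\nu}C_c(G)p_{K_\nu}$, is the technical crux.
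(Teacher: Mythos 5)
Your skeleton coincides with the paper's: the proof does run through Lemma \ref{lem:ce1}, the averaging elements are indeed taken to commute with the relevant compact open subgroup so that they fix $p_\nu$ exactly and lie in $\ker\Delta$, and the $C_{cc}$ condition is handled as you say. But the proposal stops precisely where the proof has to begin, and the two load-bearing steps are either missing or misplaced. First, condition (2) of Lemma \ref{lem:ce1} is not obtained from generic ``hyperbolic isometries with large translation length'': conjugating by such elements would move $p_\nu$ as well, and the second inequality in condition (2) would fail. The paper's actual mechanism (Lemmas \ref{lem:PS} and \ref{lem:ce2}) is an explicit ping-pong in the HNN presentation $\pi_{1}(G|_{W},W)\ast_{\theta_y}\cong\pi_{1}$: one takes $K=\overline{\langle a^{N}\rangle}$ for the specific $N$ with $\langle a^{N}\rangle=\langle a\rangle\cap t^{-2}\langle b\rangle t^{2}$, sets $r_{1}=at^{-1}bt$, $r_{2}=at^{-2}bt^{2}$, $z_{j}=r_{1}^{j}r_{2}r_{1}^{L}$, and defines the sets $S_{j}'$ by prefix patterns of Britton-reduced forms; the fourth hypothesis ($\iota_z(G_z)$ proper, i.e.\ $|n|,|m|\ge 2$) is exactly what makes the relevant words reduced, and the $z_{j}$ commute with $a^{N}$, which is why $p_K$ is preserved on the nose. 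Your choice of net (pointwise stabilizers of balls) is also not workable as stated: condition (2) must hold for \emph{every} member of the net, with averaging elements commuting with it, and ball stabilizers have no such commuting supply. The paper instead builds, for each $g\in\pi_{1}$, a subgroup $K_{0}=K\cap K_{1}$ with $K_{1}=\overline{\langle a\rangle}\cap\textbf{Stab}(r'_{2}\langle a\rangle)\subset\textbf{Stab}(gG_{t(y)})$, exploits the factorization $E_{K_{0}}=E_{K_{1}}\circ E_{K}$ to average in two stages (new elements $w_{i}={r_{1}'}^{-i}{r_{2}'}^{-2}$ for the part inside $\rg(\overline{\langle a\rangle})$, the Lemma \ref{lem:ce2} elements for the rest), and then uses Lemma \ref{lem:local basis} to see that such subgroups give a neighborhood basis of $e$. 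None of this two-stage structure is anticipated in the proposal.

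Second, you misplace the hypothesis $\kappa_{\overline y}\neq\kappa_{y}$. In the paper it is not used to produce averaging elements or non-discreteness via ``asymmetric hyperbolicity''; it is used once, in Lemma \ref{lem:faithful}, to prove that the $\pi_{1}$-action on $\tilde X$ is \emph{faithful} (the inequality $N\neq M$ forces $\langle a\rangle\cap g_{y}^{-n}(\iota_{y}(G_{y})\cap\iota_{\overline y}(G_{y}))g_{y}^{n}$ to shrink to the trivial group as $n\to\infty$). Faithfulness is what yields $\bigcap_{\nu}K_{\nu}=\{e\}$ for the stabilizer net and, combined with Lemma \ref{lem:loc cpt}, non-discreteness of $\overline{G}_{P}$; without it (e.g.\ $\mathrm{BS}(n,n)$) the kernel of the action absorbs the stabilizer intersections and the closure can be discrete. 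The Powers-type estimate you flag as the crux relies only on the properness of the edge inclusions, not on $\kappa_{\overline y}\neq\kappa_{y}$. So the proposal is a correct plan at the level of which abstract criterion to apply, but the concrete ping-pong data witnessing condition (2), the commuting-subgroup bookkeeping that makes the $p_\nu$-part of (2) and the $\ker\Delta$ requirement hold simultaneously, and the actual role of the third hypothesis would all still have to be supplied.
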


In the case of Baumslag-Solitar groups, the following are known.
(See \cite{R2}.)
\begin{itemize}
\item[$\circ$] $\overline{\text{BS}(n, m)}$ is discrete if and only if $|n|=|m|$.
\item[$\circ$] $\overline{\text{BS}(n, m)}$ is \Cs-simple if and only if $|n|, |m|\neq1$.
\end{itemize}
In Theorem \ref{prop:GBS}, the third condition corresponds to $|n|\neq|m|$ and the fourth condition corresponds to $|n|, |m|\neq1$.

From now on,
we fix the graph of groups $(G,Y)$, the maximal subtree $T$ of $Y$ and $y\in E(Y)\setminus E(T)$ which satisfy the assumptions in Theorem \ref{prop:GBS}.
In order to prove Theorem \ref{prop:GBS},
we prepare some lemmas as follows.

\begin{lem}\label{lem:faithful}
The action of $\pi_{1}:=\pi_{1}(G, Y, T)$ on the universal covering $\tilde{X}:=\tilde{X}(G, Y, T)$ is faithful.
\end{lem}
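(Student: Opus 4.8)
The plan is to prove that the kernel $N$ of the action $\pi_{1}\acts\tilde{X}$ is trivial. Any element of $N$ fixes every edge of $\tilde{X}$, and since $E(\tilde{X})=\bigsqcup_{z\in E(Y)}\pi_{1}/\iota_{z}(G_{z})$ carries the left translation action, the stabiliser of the base edge $e\,\iota_{z}(G_{z})$ is exactly $\iota_{z}(G_{z})$. Applying this to the distinguished edge $y\in E(Y)\setminus E(T)$ and to $\overline{y}$ gives $N\subseteq\iota_{y}(G_{y})$ and $N\subseteq\iota_{\overline{y}}(G_{y})$, hence $N\subseteq D:=\iota_{y}(G_{y})\cap\iota_{\overline{y}}(G_{y})$. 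Moreover $N$ is normal in $\pi_{1}$, so it is stable under conjugation by the stable letter $g_{y}$; this is the structural fact I would exploit.

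Conjugation by $g_{y}$ realises the defining relation $g_{y}\iota_{y}(a)g_{y}^{-1}=\iota_{\overline{y}}(a)$, i.e. an isomorphism $\phi\colon\iota_{y}(G_{y})\xrightarrow{\sim}\iota_{\overline{y}}(G_{y})$ of two copies of $\IZ$ inside $\pi_{1}$ sending $u:=\iota_{y}(c_{0})$ to $v:=\iota_{\overline{y}}(c_{0})$ for a generator $c_{0}$ of $G_{y}$. By the index computations recorded just before Theorem \ref{prop:GBS}, $D$ has index $\kappa_{y}$ in $\langle u\rangle$ and index $\kappa_{\overline{y}}$ in $\langle v\rangle$, so $D$ is the infinite cyclic group generated by $d:=u^{\kappa_{y}}=v^{\pm\kappa_{\overline{y}}}$. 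Writing a putative generator of a nontrivial $N$ as $w=d^{s}$ with $s\neq0$ and using that $\phi(N)=N$, a short computation comparing $\phi(w)=v^{\kappa_{y}s}$ with the elements of $\langle d^{s}\rangle$ forces $\kappa_{\overline{y}}\mid\kappa_{y}$; running the same argument for $\phi^{-1}$ forces $\kappa_{y}\mid\kappa_{\overline{y}}$. Hence $\kappa_{y}=\kappa_{\overline{y}}$, contradicting the choice of $y$, and therefore $N=\{e\}$.

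The main obstacle is not the final divisibility computation, which is routine, but setting the stage for it correctly: the two edge groups $\iota_{y}(G_{y})\subseteq G_{t(y)}$ and $\iota_{\overline{y}}(G_{y})\subseteq G_{o(y)}$ a priori sit in different vertex groups, and their intersection $D$ inside $\pi_{1}$ is only understood through the identifications along the geodesic in $T$ that were used to define $k_{c}$, $k_{\overline{c}}$ and ultimately $\kappa_{y}$, $\kappa_{\overline{y}}$. I would therefore take care to phrase the whole scaling argument intrinsically in terms of the single cyclic group $D$ and the two indices $\kappa_{y},\kappa_{\overline{y}}$, quoting the index identities already established rather than re-deriving $D$ by hand; once $N\subseteq D$ and the $g_{y}$-invariance of $N$ are in place, the hypothesis $\kappa_{\overline{y}}\neq\kappa_{y}$ does the rest.
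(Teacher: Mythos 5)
Your argument is correct, but it runs along a genuinely different track from the paper's. The paper never uses normality of the kernel: it takes a single element $g$ acting trivially on $\tilde{X}$, passes through the HNN splitting $\pi_{1}\cong\pi_{1}(G|_{W},W)\ast_{\theta_{y}}$ of Lemma \ref{lem:subgraph} to place $g\in G_{t(y)}$ with $g_{y}^{n}gg_{y}^{-n}\in\iota_{y}(G_{y})\cap\iota_{\overline{y}}(G_{y})$ for all $n\geq 0$, and then kills $g$ by the explicit computation $\langle a\rangle\cap g_{y}^{-n}\bigl(\iota_{y}(G_{y})\cap\iota_{\overline{y}}(G_{y})\bigr)g_{y}^{n}=\bigl\langle a^{(N,M)\left|M/(N,M)\right|^{n}}\bigr\rangle$: after swapping $y$ and $\overline{y}$ if necessary so that the relevant ratio has modulus greater than $1$, these subgroups shrink to $\{e\}$ as $n\to\infty$. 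You instead exploit the kernel $N$ as a normal subgroup: the reduction $N\subseteq D:=\iota_{y}(G_{y})\cap\iota_{\overline{y}}(G_{y})$ falls out immediately from the coset description of edge stabilisers in $\tilde{X}$ (no normal-form or Britton-type reasoning needed), and then $\phi$-invariance of $N$, where $\phi=\mathrm{Ad}(g_{y})$ realises $\iota_{y}(a)\mapsto\iota_{\overline{y}}(a)$, turns the hypothesis $\kappa_{y}\neq\kappa_{\overline{y}}$ into a one-step divisibility contradiction, replacing the paper's infinite iteration by the observation that a nontrivial subgroup of the infinite cyclic group $D$ cannot be invariant under a scaling of ratio $\neq\pm1$. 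Your two divisibility directions via $\phi$ and $\phi^{-1}$ are both needed and both present, and the sign ambiguity $d=v^{\pm\kappa_{\overline{y}}}$ is harmless since the indices are positive; you should just record explicitly that $D$ is nontrivial (it has finite index $k^{\prime}_{\overline{y}}$ in $G_{o(y)}$ by the computation preceding Theorem \ref{prop:GBS} --- and if it were trivial you would be done at once), so that ``a generator $d$ of $D$'' makes sense. What each approach buys: yours is shorter and more conceptual; the paper's elementwise argument is formally independent of the subgroup structure of the kernel, and its quantitative index computation is of a piece with the estimates reused later (for instance in Lemmas \ref{lem:loc cpt} and \ref{lem:ce2}), whereas your argument would not by itself supply those.
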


\begin{proof}
Suppose that $g\in\pi_{1}$ acts trivially on $\tilde{X}$.
It suffices to show $g=e$.
Since the left multiplication of $g$ preserves all elements of $V(\tilde{X})=\sqcup\pi_{1}/G_{P}$,
we have $g_{y}^{n}gg_{y}^{-n}\in G_{o(y)}\cap G_{t(y)}$ for all $n\in\IZ_{\geq0}$.
Let $W$ be a connected subgraph of $Y$ with $V(W):=V(Y)$, $E(W):=E(Y)\setminus\{y, \overline{y}\}$.
By Lemma \ref{lem:subgraph},
$\pi_{1}$ has a structure of HNN extension and $g_{y}$ corresponds to the stable letter.
Therefore, we get
\[
g\in G_{t(y)}\quad and \quad g_{y}^{n}gg_{y}^{-n}\in\iota_{y}(G_{y})\cap\iota_{\overline{y}}(G_y)
\]
for every $n\in\IZ_{\geq0}$.
Since all edge groups of $Y$ are isomorphic to $\IZ$,
we can take the free generators $a$ of $\iota_{y}(G_{y})$ and $b$ of $\iota_{\overline{y}}(G_y)$ with $g_{y}ag_{y}^{-1}=b$.
Take the integers $N, M\in\IZ_{>0}$ with $\iota_{y}(G_y)\cap\iota_{\overline{y}}(G_y)=\langle a^{N}\rangle=\langle b^{M}\rangle$,
then $N\neq M$ holds because of the assumption $\kappa_{y}\neq\kappa_{\overline{y}}$.
By replacing $y$ by $\overline{y}$,
we may assume $\left|\frac{N}{(N, M)}\right|\neq1$.
By straightforward computation, we have 
\[\langle a\rangle\cap g_{y}^{-n}(\iota_{y}(G_{y})\cap\iota_{\overline{y}}(G_y))g_{y}^{n}
=\left\langle a^{(N, M)\left|\frac{M}{(N, M)}\right|^{n}}\right\rangle
\] 
Thus $g=e$.
\end{proof}

From now on, we put $\pi_{1}:=\pi_{1}(G, Y, T)$ and $\tilde{X}:=\tilde{X}(G, Y, T)$ as used in Lemma \ref{lem:faithful}.
Let $\textbf{Aut}(\tilde{X})$ be the automorphism groups of $\tilde{X}$.
We suppose that $\overline{\Gamma}$ is the closure of $\Gamma$ in $\textbf{Aut}(\tilde{X})$ for any subset $\Gamma\subset\pi_{1}$. 
For $g\in\pi_{1}$ and $P\in V(Y)$, the subgroup of $\textbf{Aut}(\tilde{X})$ which stabilize $gG_{P}\in V(\tilde{X})$ is denoted by $\textbf{Stab}(gG_{P})$.
If $w\in E(Y)$,
then there is a subgraph $W$ of $Y$ with $V(W)=V(Y)$ and $E(W)=E(Y)\setminus{y, \overline{y}}$.
We use the letter $Y\setminus{\{y\}}$ for this $W$.

\begin{lem}\label{lem:local basis}
For any $P\in Y$,
the family of finite intersections of $\{\overline{\pi}_{1}\cap\rm{\mathbf{Stab}}(gG_{P})\}_{g\in \pi_{1}}$ gives a neighborhood basis of $e$ in $\overline{\pi}_{1}$.
\end{lem}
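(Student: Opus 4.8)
The plan is to work with the permutation topology on $\textbf{Aut}(\tilde X)$, for which a neighborhood basis of the identity is given by the pointwise stabilizers of finite sets of vertices of $\tilde X$; the tree is locally finite since each index $[G_Q:\iota_y(G_y)]$ is finite, so this is the relevant topology. Intersecting with $\overline{\pi}_1$, the sets $\overline{\pi}_1\cap\textbf{Stab}(F)$ with $F\subset V(\tilde X)$ finite form a neighborhood basis of $e$. Since each $\textbf{Stab}(gG_P)$ is open in $\textbf{Aut}(\tilde X)$, every finite intersection of the proposed sets is already an open neighborhood of $e$, so one inclusion is automatic and the real content is to show that every $\overline{\pi}_1\cap\textbf{Stab}(F)$ contains a finite intersection of sets $\overline{\pi}_1\cap\textbf{Stab}(gG_P)$. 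As $F$ is finite, it suffices to treat a single vertex $v\in V(\tilde X)$ and produce finitely many type-$P$ vertices whose common stabilizer already fixes $v$.

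For this I would exploit that $\tilde X$ is a tree: an automorphism fixing two vertices $u_1,u_2$ fixes the entire geodesic $[u_1,u_2]$ pointwise. Recalling that the type-$P$ vertices are exactly the cosets $gG_P$ with $g\in\pi_1$, it is therefore enough to find two type-$P$ vertices $u_1=g_1G_P$ and $u_2=g_2G_P$ lying in two distinct components of $\tilde X\setminus\{v\}$; then $v\in[u_1,u_2]$ and $\textbf{Stab}(u_1)\cap\textbf{Stab}(u_2)\subseteq\textbf{Stab}(v)$. Two distinct components exist because every vertex of $\tilde X$ has degree at least $2$: the degree of $gG_Q$ is the sum over edges $y$ with $t(y)=Q$ of the indices $[G_{t(y)}:\iota_y(G_y)]$, each such index is $\geq2$ by the fourth assumption of Theorem \ref{prop:GBS}, and connectedness of $Y$ guarantees at least one edge $y$ with $t(y)=Q$.

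The heart of the argument, and the step I expect to be the main obstacle, is showing that \emph{each} component $S$ of $\tilde X\setminus\{v\}$ contains a type-$P$ vertex, that is, that the typing map $V(S)\to V(Y)$ hits $P$. I would prove this by lifting paths: writing $w$ for the neighbor of $v$ that lies in $S$ and $Q_w$ for its type, choose a reduced path in $Y$ from $Q_w$ to $P$ (possible since $Y$ is connected) and lift it edge by edge to $\tilde X$ starting at $w$. A lift exists at each step, and because the $Y$-path is reduced the lifted edges never form a backtracking pair, so the lift is a geodesic in $\tilde X$. Choosing the very first lifted edge to point away from $v$ --- possible either because its type differs from that of the edge joining $v$ to $w$, or, when the types coincide, because the index $\geq2$ from the fourth assumption supplies a second lift whose endpoint is a neighbor of $w$ other than $v$, hence in $S$ --- forces this geodesic from $w$ to avoid $v$ altogether and thus to remain inside $S$, and its endpoint is a type-$P$ vertex of $S$.

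Applying this in two distinct components of $\tilde X\setminus\{v\}$ yields the required $u_1,u_2$, and assembling over all $v\in F$ produces a finite family of type-$P$ vertices whose common stabilizer is contained in $\textbf{Stab}(F)$. Intersecting with $\overline{\pi}_1$ then shows that the finite intersections of $\{\overline{\pi}_1\cap\textbf{Stab}(gG_P)\}_{g\in\pi_1}$ are cofinal among the basic neighborhoods of $e$ in $\overline{\pi}_1$, which is exactly the assertion of the lemma.
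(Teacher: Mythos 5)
Your proof is correct, but it takes a genuinely different route from the paper's. The paper argues algebraically: using Lemma \ref{lem:subgraph} it splits $\pi_1$ at an edge $w$ joining $P$ to $Q$ as an amalgamated free product or an HNN extension, and a normal-form computation (for $b\in G_Q\setminus\iota_w(G_w)$ one gets $G_P\cap bG_Pb^{-1}\subseteq\iota_w(G_w)\subseteq G_Q$, resp.\ the analogous statement with the stable letter $g_w$) shows that the intersection of two explicit type-$P$ stabilizers, $\mathbf{Stab}(gG_P)\cap\mathbf{Stab}(gbG_P)$, lies inside $\mathbf{Stab}(gG_Q)$; the case of a general vertex $Q$ is then handled by iterating this step along a geodesic in $Y$ from $P$ to $Q$. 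You instead argue geometrically in the Bass--Serre tree: for each $v\in V(\tilde X)$ you produce type-$P$ vertices $u_1,u_2$ in two distinct components of $\tilde X\setminus\{v\}$, so that $v$ lies on the geodesic $[u_1,u_2]$ and $\mathbf{Stab}(u_1)\cap\mathbf{Stab}(u_2)\subseteq\mathbf{Stab}(v)$; the existence of two components comes from degree at least $2$ at every vertex, and a type-$P$ vertex in each component comes from lifting a reduced path of $Y$ with the first lifted edge chosen away from $v$ --- in both places you invoke the fourth hypothesis of Theorem \ref{prop:GBS} exactly where the paper invokes it to choose $b\notin\iota_w(G_w)$. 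What each approach buys: yours is more transparent and strictly more general, since it nowhere uses that the vertex and edge groups are isomorphic to $\IZ$ and applies to any graph of groups with all edge inclusions proper, whereas the paper's computation stays inside the algebraic framework of Lemma \ref{lem:subgraph} already set up and exhibits the dominating stabilizers via explicit group elements ($b$, $g_w$), in the same style as the later lemmas. Two points you leave implicit are harmless but worth a line: when the neighbor $w$ of $v$ already has type $P$, the length-zero path suffices; and distinct lifts of an edge at $w$ have distinct endpoints because $\tilde X$ is a tree. Also, local finiteness of $\tilde X$ is irrelevant to the permutation topology itself --- it matters only for compactness of stabilizers, as in Lemma \ref{lem:loc cpt}.
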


\begin{proof}
By definition of $\overline{\pi}_{1}$,
the family of finite intersections of the sets $\{\overline{\pi}_{1}\cap\textbf{Stab}(gG_{P})\mid|g\in \pi_{1}, Q\in V(Y)\}$ is a neighborhood basis of $e$.
Thus it suffices to show that for any $g\in \pi_{1}$ and $Q\in V(Y)$,
there are $g_{1}, g_{2},\dots, g_{n}\in\pi_{1}$ with $\bigcap_{i=1}^{n}\overline{\pi}_{1}\cap\textbf{Stab}(g_{i}G_{P})\subset{\overline{\pi}_{1}\cap\textbf{Stab}(gG_{P})}$.

First,
we assume there is an edge $w$ such that $o(w)=P$ and $t(w)=Q$.
Fix $g\in\pi_{1}$.
When the graph $Y\setminus{\{w\}}$ is disconnected,
there are subgraphs $W_{1}$ and $W_{2}$ with $\pi_{1}(G|_{W_{1}}, W_{1})\ast_{G_{w}}\pi_{1}(G|_{W_{2}}, W_{2})\cong\pi_{1}$ as in Lemma\ref{lem:subgraph}.
Suppose $P=o(w)\in V(W_{1})$ and $Q=t(w)\in V(W_{2})$.
We can take $b\in G_{Q}\setminus{\iota_{w}(G_{w})}\subset{\pi_{1}(G|_{W_{2}}, W_{2})}$.
For any $h\in\pi_{1}$,
if $g^{-1}hg\in G_{P}$ and $b^{-1}g^{-1}hgb\in G_{P}$ hold,
then $g^{-1}hg\in G_{y}\subset{G_Q}$.
Thus we have $\pi_{1}\cap\textbf{Stab}(gG_{P})\cap\textbf{Stab}(gbG_{Q})\subset\pi_{1}\cap\textbf{Stab}(gG_{Q})$.
By taking closures, we get $\overline{\pi}_{1}\cap\textbf{Stab}(gG_{P})\cap\textbf{Stab}(gbG_{P})\subset\overline{\pi}_{1}\cap\textbf{Stab}(gG_{Q})$ since stabilizers are clopen subgroups of $\textbf{Aut}(\tilde{X})$.

When the graph $W=Y\setminus{\{w\}}$ is disconnected, 
we have $\pi_{1}(G|_{W}, W)\ast_{\theta_w}\cong\pi_{1}$.
Thus for any $h\in\pi_{1}$,
if $g^{-1}hg\in G_{P}$ and $g_{w}g^{-1}hgg_{w}^{-1}\in G_{P}$ hold,
then $g^{-1}hg\in \iota_{w}(G_{w})$.
We get $\pi_{1}\cap\textbf{Stab}(gG_{P})\cap\textbf{Stab}(gg_{w}^{-1}G_{Q})\subset\pi_{1}\cap\textbf{Stab}(gG_{Q})$.
This leads to $\overline{\pi}_{1}\cap\textbf{Stab}(gG_{P})\cap\textbf{Stab}(gbG_{P})\subset\overline{\pi}_{1}\cap\textbf{Stab}(gG_{Q})$.

For a general $Q$,
there is a geodesic path from $P$ to $Q$ in $Y$.
By using same argument repeatedly,
we get $g_{1}, g_{2},\dots, g_{n}\in\pi_{1}$ with $\bigcap_{i=1}^{n}\overline{\pi}_{1}\cap\textbf{Stab}(g_{i}G_{P})\subset{\overline{\pi}_{1}\cap\textbf{Stab}(gG_{P})}$.
\end{proof}

\begin{lem}\label{lem:loc cpt}
For any $P\in V(Y)$,
the closure $\overline{G}_{P}$ is a non-discrete compact open subgroup of $\overline{\pi}_{1}$.
\end{lem}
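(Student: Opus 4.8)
The plan is to show that $\overline{G}_{P}$ coincides with the stabilizer of the vertex $eG_{P}\in V(\tilde{X})$ inside $\overline{\pi}_{1}$, and to deduce the three asserted properties from this identification together with the local finiteness of $\tilde{X}$ and the faithfulness of the action (Lemma \ref{lem:faithful}).

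First I would record that $\tilde{X}$ is locally finite: each vertex of $\tilde{X}$ has finite degree, since the indices $[G_{t(y)}:\iota_{y}(G_{y})]$ are finite (all groups being isomorphic to $\IZ$) and only finitely many edges of $Y$ meet each vertex. Hence the vertex stabilizer $\textbf{Stab}(eG_{P})$ is a compact open subgroup of $\textbf{Aut}(\tilde{X})$. Because $g\cdot eG_{P}=gG_{P}$ equals $eG_{P}$ exactly when $g\in G_{P}$, the subgroup $G_{P}$ fixes $eG_{P}$, and as $\textbf{Stab}(eG_{P})$ is closed we obtain $\overline{G}_{P}\subseteq\textbf{Stab}(eG_{P})$. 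In particular $\overline{G}_{P}$ is a closed subset of a compact set, hence compact; it is a subgroup because the closure of a subgroup in a topological group is a subgroup.

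The heart of the proof is the identity $\overline{G}_{P}=\overline{\pi}_{1}\cap\textbf{Stab}(eG_{P})$. The inclusion $\subseteq$ was just noted. For the reverse inclusion I would use the exact computation $\pi_{1}\cap\textbf{Stab}(eG_{P})=G_{P}$, that is, the stabilizer of $eG_{P}$ in $\pi_{1}$ is precisely $G_{P}$, together with the following elementary feature of the topology of pointwise convergence on $V(\tilde{X})$ underlying Lemma \ref{lem:local basis}: given $h\in\overline{\pi}_{1}$ fixing $eG_{P}$ and any finite set $S\subseteq V(\tilde{X})$ with $eG_{P}\in S$, choose $\gamma\in\pi_{1}$ agreeing with $h$ on $S$; then $\gamma\cdot eG_{P}=h\cdot eG_{P}=eG_{P}$, so $\gamma\in\pi_{1}\cap\textbf{Stab}(eG_{P})=G_{P}$, and $\gamma$ lies in the given basic neighborhood of $h$. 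Thus every neighborhood of $h$ meets $G_{P}$, whence $h\in\overline{G}_{P}$. Since $\textbf{Stab}(eG_{P})$ is open in $\textbf{Aut}(\tilde{X})$, this identity shows that $\overline{G}_{P}$ is open in $\overline{\pi}_{1}$.

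Finally, non-discreteness follows without further work: by Lemma \ref{lem:faithful} the action of $\pi_{1}$ on $\tilde{X}$ is faithful, so $G_{P}\cong\IZ$ embeds into $\overline{\pi}_{1}$ and $\overline{G}_{P}\supseteq G_{P}$ is infinite, while an infinite compact Hausdorff group cannot be discrete. I expect the only delicate point to be the approximation argument of the third paragraph, and even this reduces to the exact identification $\pi_{1}\cap\textbf{Stab}(eG_{P})=G_{P}$; compactness, openness, and non-discreteness are then formal consequences of local finiteness of $\tilde{X}$ and faithfulness of the action.
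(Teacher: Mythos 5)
There is a genuine gap at the compactness step. Your argument rests on $\tilde{X}$ being locally finite, which you justify by asserting that only finitely many edges of $Y$ meet each vertex. But the paper nowhere assumes $Y$ is finite or locally finite: Definition \ref{defn} and the hypotheses of Theorem \ref{prop:GBS} impose no such restriction, and the concluding Remark explicitly treats countability of $Y$ as an \emph{extra} assumption. Each individual index $[G_{t(y)}:\iota_{y}(G_{y})]$ is indeed finite, but the degree of a vertex of $\tilde{X}$ lying over $P$ is the sum of such indices over all edges of $Y$ incident to $P$, which may be an infinite set. When that happens, $\tilde{X}$ is not locally finite and $\textbf{Stab}(eG_{P})$ need not be compact in $\textbf{Aut}(\tilde{X})$: for instance, if infinitely many loops carrying the same index data are attached at $P$, then $\textbf{Stab}(eG_{P})$ permutes infinitely many pairwise isomorphic branches, has an infinite orbit on the sphere of radius one, and so is open but not compact. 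Since your non-discreteness step (an infinite compact Hausdorff group is non-discrete) also leans on compactness, two of your three conclusions are left unproved in the paper's generality; what survives intact is the identification $\overline{G}_{P}=\overline{\pi}_{1}\cap\textbf{Stab}(eG_{P})$ and the openness it yields, which you prove carefully and which coincides with the paper's own (essentially unargued) first line.

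The paper's compactness proof avoids local finiteness entirely. It shows that $G_{P}$ is commensurated in $\pi_{1}$: writing $g=|c,\mu|$ for a closed path $c=(y_{1},\dots,y_{n})$ and $m_{i}:=[G_{t(y_{i})}:\iota_{y_{i}}(G_{y_{i}})]$, one gets $ga^{\prod_{i}m_{i}}g^{-1}\in G_{P}\setminus\{e\}$, hence $[G_{P}:G_{P}\cap gG_{P}g^{-1}]<\infty$ for every $g\in\pi_{1}$. Consequently all $G_{P}$-orbits on $\pi_{1}/G_{P}$ are finite, so by Tzanev's argument the closure of the image of $G_{P}$ in the symmetric group $S_{\pi_{1}/G_{P}}$ is compact; Lemma \ref{lem:local basis} then shows that the natural map from $\overline{\pi}_{1}$ onto $\overline{f(\pi_{1})}\subset S_{\pi_{1}/G_{P}}$ is a homeomorphism, which transports compactness back to $\overline{G}_{P}$. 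Non-discreteness is likewise obtained without invoking compactness, from Lemma \ref{lem:local basis} together with $\textbf{Stab}(gG_{P})\cap G_{P}=gG_{P}g^{-1}\cap G_{P}\neq\{e\}$ and the fact that a finite intersection of nontrivial subgroups of $\IZ$ is nontrivial, so $\{e\}$ is not open in $\overline{G}_{P}$. If you add the hypothesis that $Y$ is locally finite, your route is a correct and genuinely more elementary alternative; in the stated generality, the local-finiteness claim must be replaced by a commensuration argument of this kind.
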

\begin{proof}
Since $\overline{G}_{P}=\overline{\pi}_{1}\cap\textbf{Stab}(G_{P})$,
it is trivial that $\overline{G}_{P}$ is open.

By Lemma \ref{lem:faithful} and \ref{lem:local basis},
we have $\bigcap_{g\in\pi_{1}}\textbf{Stab}(gG_{P})\cap\pi_{1}=\{e\}$ for any $P\in V(Y)$.
Let $S_{\pi_{1}/G_{P}}$ be the topological group of all bijections from $\pi_{1}/G_{P}$ to $\pi_{1}/G_{P}$. There is the monomorphism $f\colon\pi_{1}\rightarrow S_{\pi_{1}/G_{P}}$ given by left multiplication.
This $f$ is continuous with respect to the relative topology of $\pi_{1}$ induced by $\textbf{Aut}(\tilde{X})$
and $S_{\pi_{1}/G_{P}}$ is complete.
Thus $f$ extends to a continuous homomorphism from $\overline{\pi}_{1}$.
By Lemma \ref{lem:local basis},
the extension of $f$ gives a homeomorphism from $\overline{\pi}_{1}$ onto $\overline{f(\pi_{1})}$.
Therefore,
it suffices to show that $\overline{f(G_{P})}\subset{S_{\pi_{1}/G_{P}}}$ is compact.

Let $a$ be a generator of $G_{P}$ and $g\in\pi_{1}$.
There is a word $(c,\mu)$ of $(G,Y)$ such that $c$ is a closed path from $P$ to $P$ and $g=|c, \mu|$.
Suppose that $c=(y_{1}, y_{2},\dots, y_{n})$ and $m_{i}:=[G_{t(y_{i})}:\iota_{y_{i}}(G_{y_{i}})]$.
Since all vertex group of $(G, Y)$ is isomrphic to $\IZ$,
we get $ga^{(\prod_{i=1}^{n}m_{i})}g^{-1}\in G_{P}\setminus\{e\}$.
Thus for any $g\in\pi_{1}$,
the relative order $[G_{P}:G_{P}\cap gG_{P}g^{-1}]$ is finite.
By using the argument in Section 4 of \cite{Tz},
the closure $\overline{f(G_{P})}\subset{S_{\pi_{1}/G_{P}}}$ is compact.

In proof of the compactness of $\overline{G}_{P}$,
we have $\textbf{Stab}(gG_{P})\cap G_{P}=gG_{P}g^{-1}\cap G_{P}\neq\{e\}$ for any $g\in\pi_{1}$.
Since $G_{P}$ is isomorphic to $\IZ$,
the intersection $\bigcap_{i=1}^{n}\textbf{Stab}(g_{i}G_{P})\cap\overline{G_{P}}$ is not a trivial subgroup of $\overline{G_{P}}$ for any $g_{1}, g_{2},\dots, g_{n}\in\pi_{1}$.
Thus $\{e\}$ is not an open subgroup of $\overline{G_{P}}$ and $\overline{G_{P}}$ is non-discrete.
\end{proof}
This lemma shows that $\overline{\pi}_{1}$ is a non-discrete locally compact group.
To prove \Cs-simplicity of $\overline{\pi}_{1}$,
we use the following lemma which is shown in a similar way to the proof of Lemma 1.2 in \cite{P-S}.

\begin{lem}\label{lem:PS}
Let $G$ be a totally disconnected group and $K$ be a compact open subgroup.
If any finite set $\{g_{1}, g_{2},\dots, g_{l}\}$ of $G\setminus{K}$ has pairwise disjoint subsets  $S_{1}, S_{2},\dots, S_{9}$ of $G$ and $z_{1}, z_{2},\dots, z_{9}\in G$ with the following conditions,
then property (2) in Lemma \ref{lem:ce1} holds for $A_{\nu}=\rg(G)$ and $K_{\nu}=K$.
\begin{itemize}
\item Each $S_{i}$ satisfies $KS_{i}=S_{i}$.
\item 
Each  $z_i$ commutes with every element of K and satisfies $z_{i}g_{j}z_{i}^{-1}(G\setminus{S_{i}})\subset{S_{i}}$ for every $g_{j}$.
\end{itemize}
\end{lem}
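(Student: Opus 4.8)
The plan is to reduce Lemma~\ref{lem:PS} to a Powers-type averaging estimate, exactly as in the classical argument of Powers and its refinement in Lemma~1.2 of \cite{P-S}, but carried out inside the corner $p_K \rg(G) p_K$ and phrased in terms of the conditional expectation $E_K$. The idea is that the nine pairwise disjoint sets $S_1,\dots,S_9$ furnish nine ``independent directions'' along which conjugation by the $z_i$ pushes mass; averaging over these directions forces the off-diagonal part $x-E_K(x)$ to shrink while keeping $p_K$ essentially fixed. Since property~(2) of Lemma~\ref{lem:ce1} is required only for $A_\nu=\rg(G)$ and $K_\nu=K$, and since $p_K C_c(G) p_K$ is dense in $p_K\rg(G)p_K$, it suffices to establish the two norm estimates for a self-adjoint $x$ of the form $x=\sum_{j}\alpha_j p_K\lambda_{g_j}p_K$ with finitely many double-coset representatives $g_j$, where $E_K(x)$ picks out the $g_j\in K$ term.

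\emph{First} I would fix such an $x$ and let $\{g_1,\dots,g_l\}$ be the double-coset representatives lying outside $K$, so that $x-E_K(x)=\sum_{j=1}^{l}\alpha_j p_K\lambda_{g_j}p_K$. I would then invoke the hypothesis to obtain the disjoint sets $S_1,\dots,S_9$ and elements $z_1,\dots,z_9$, and consider the average
\[
\frac{1}{9}\sum_{i=1}^{9}\lambda_{z_i}\bigl(x-E_K(x)\bigr)\lambda_{z_i}^{*}.
\]
\emph{Next} comes the key operator-theoretic estimate: using the condition $z_i g_j z_i^{-1}(G\setminus S_i)\subset S_i$ together with $KS_i=S_i$, each summand $\lambda_{z_i}p_K\lambda_{g_j}p_K\lambda_{z_i}^{*}$ maps the orthocomplement of the closed span of $\{\delta_h : h\in S_i\}$ into that span. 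The disjointness of the $S_i$ then makes the nine conjugated operators behave like a free family of partial shifts, and a Powers-style $\ell^2$ orthogonality computation bounds the norm of the average by a universal constant (the number $9$ is chosen so that this constant is $<1$, and iterating $m$ times drives it below any prescribed $\epsilon$). Simultaneously, because each $z_i$ commutes with every element of $K$, one has $\lambda_{z_i}p_K\lambda_{z_i}^{*}=p_{z_iKz_i^{-1}}=p_K$, so the averaged projection $\frac{1}{9}\sum_i\lambda_{z_i}p_K\lambda_{z_i}^{*}$ equals $p_K$ exactly; the second estimate in~(2) holds with error $0$ at each stage and is preserved under iteration.

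\emph{Finally} I would iterate: replacing $x-E_K(x)$ by its average and reapplying the construction $m$ times yields group elements $g_1',\dots,g_n'$ (products of the $z_i$'s) with $\norm{\frac{1}{n}\sum_i \lambda_{g_i'}(x-E_K(x))\lambda_{g_i'}^{*}}<\epsilon$ and $p_K=\frac{1}{n}\sum_i\lambda_{g_i'}p_K\lambda_{g_i'}^{*}$, which is property~(2). \textbf{The main obstacle} I expect is the combinatorial bookkeeping in the orthogonality estimate: one must verify that the condition $z_i g_j z_i^{-1}(G\setminus S_i)\subset S_i$, valid for a single conjugator, propagates correctly through the $m$-fold iterated averages so that the partial shifts associated to distinct words in the $z_i$ remain mutually orthogonal on a large enough subspace. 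This is precisely the point where \cite{P-S} must be followed carefully, since the naive tensor-power bound can fail unless the disjointness of the $S_i$ is exploited at every level of the iteration; the compatibility $KS_i=S_i$ and the commutation of $z_i$ with $K$ are exactly what guarantee that passing to the corner $p_K\rg(G)p_K$ does not disturb these orthogonality relations.
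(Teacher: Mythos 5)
Your core argument is the paper's own: orthogonal projections $q_i$ onto $L^2(S_i)$, the vanishing $(1-q_i)\lambda_{z_i}f\lambda_{z_i}^{*}(1-q_i)=0$ forced by $z_ig_jz_i^{-1}(G\setminus S_i)\subset S_i$, the Powers-type bound $\norm{\frac19\sum_i\lambda_{z_i}f\lambda_{z_i}^{*}}\leq\frac23\norm{f}$ from the disjointness of the nine sets, the exact identity $\lambda_{z_i}p_K\lambda_{z_i}^{*}=p_{z_iKz_i^{-1}}=p_K$, and iteration. However, your reduction step contains a genuine flaw: you claim it suffices to treat self-adjoint $x=\sum_j\alpha_jp_K\lambda_{g_j}p_K$ because $p_KC_c(G)p_K$ is dense in $p_K\rg(G)p_K$. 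But property (2) of Lemma \ref{lem:ce1} with $A_\nu=\rg(G)$ quantifies over \emph{all} self-adjoint $x\in\rg(G)$, and $p_K\rg(G)p_K$ is a proper corner, not dense in $\rg(G)$; an estimate on $p_K(x-E_K(x))p_K$ gives no control of $x-E_K(x)$ itself. The full strength is actually used downstream: in the proof of Theorem \ref{prop:GBS} the property is applied to arbitrary self-adjoint $x\in\rg(\overline{\pi}_1)$. The correct dense class is the one the paper uses: self-adjoint $f\in C_c(G\setminus K)$ (which handles $x-E_K(x)$ for $x$ in the dense subalgebra $C_c(G)$, since $E_K$ is restriction of functions to $K$ and is contractive). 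By compactness, $\supp f\subset\bigcup_{j=1}^{l}Kg_jK$ for finitely many $g_j\in G\setminus K$, and the displacement hypothesis, stated only for the elements $g_j$, upgrades to every $g\in Kg_jK$ precisely because $z_i$ centralizes $K$ and $KS_i=S_i$: indeed $z_ikg_jk'z_i^{-1}=k\,z_ig_jz_i^{-1}\,k'$ and $k'(G\setminus S_i)=G\setminus S_i$, $kS_i=S_i$. Your operator estimate then goes through verbatim without any bi-$K$-invariance of $f$.

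The ``main obstacle'' you single out, namely preserving orthogonality of partial shifts attached to distinct words in the $z_i$ across the $m$-fold iteration, is not present in the paper's argument and need not be resolved: after one step, $\frac19\sum_i\lambda_{z_i}f\lambda_{z_i}^{*}$ is again a self-adjoint element of $C_c(G)$ supported in $\bigcup_{i,j}Kz_ig_jz_i^{-1}K$, and this set avoids $K$ (if $kz_ig_jz_i^{-1}k'\in K$ then $g_j\in z_i^{-1}Kz_i=K$, a contradiction). One therefore simply re-invokes the hypothesis of the lemma for the \emph{new} finite set of double-coset representatives, obtaining fresh sets $S_i$ and conjugators $z_i$ at each stage; no cross-level orthogonality bookkeeping is required. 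Your observation that the second estimate holds exactly at every stage is correct, since all conjugators produced are products of elements centralizing $K$. With the density reduction corrected as above, your proof coincides with the paper's.
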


\begin{proof}
Take a self-adjoint element $f\in C_{c}(G\setminus{K})\subset{\rg(G)}$,
then there are $g_{1}, g_{2},\dots, g_{l}\in G\setminus{K}$ which have pairwise distinct $K$-double cosets and $f$ is supported on $\bigcup_{j=1}^{l}Kg_{j}K$.
We have subsets $S_{1}, S_{2},\dots, S_{9}$ of $G$ and $z_{1}, z_{2},\dots, z_{9}\in G$ which satisfy the assumptions of the lemma for $g_{1}, g_{2},\dots, g_{l}$. 
Since $S_{1}, S_{2},\dots, S_{9}$ are unions of $K$-double cosets,
they are clopen.
We define $q_{i}$ as the orthogonal projection onto $L^{2}(S_{i})$,
then for any $i$ and $g\in\bigcup_{j=1}^{l}Kg_{j}K$,
we get $(1-q_{i})\lambda_{z_{i}gz_{i}^{-1}}(1-q_{i})=0$.
Thus  the self-adjoint element $f$ satisfies $(1-q_{i})\lambda_{z_{i}}f\lambda_{z_i}^{*}(1-q_{i})=0$.
Generally,
if a self-adjoint operator $T$ and a projection $q$ on a Hilbert space $\cH$ satisfy $(1-q)T(1-q)=0$,
then $\left|\langle T\xi,\xi\rangle\right|\leq2\|T\|\|q\xi\|$ holds for any unit vectors $\xi\in\cH$.
Thus we have
\begin{align*}
|\langle(\frac{1}{9}\sum_{i=1}^{9}\lambda_{z_{i}}f\lambda_{z_i}^{*})\xi,\xi\rangle|
\leq\frac{2}{3}\|f\|,
\end{align*}
for any unit vector $\xi$ (see proof of Lemma 1.2 of \cite{P-S}),
then 
\[\left\|\frac{1}{9}\sum_{i=1}^{9}\lambda_{z_{i}}f\lambda_{z_i}^{*}\right\|\leq\frac{2}{3}\|f\|.\]
Since the continuous function $\frac{1}{9}\sum_{i=1}^{9}\lambda_{z_{i}}f\lambda_{z_i}^{*}$ is supported on $\bigcup_{i, j}Kz_{i}g_{j}z_{i}^{-1}K$ and the assumptions $z_{i}g_{j}z_{i}^{-1}(G\setminus{A_{i}})\subset{A_{i}}$ hold for every $i, j$,
the support of $\frac{1}{9}\sum_{i=1}^{9}\lambda_{z_{i}}f\lambda_{z_i}^{*}$ is on $G\setminus{K}$.
By repeating the same process,
for any $\epsilon>0$,
we can take  $w_{1}, w_{2}, \dots,w_{n}\in G$ with $\|\frac{1}{n}\sum_{i=1}^{n}\lambda_{w_{i}}f\lambda_{w_i}^{*}\|<\epsilon$ and $\lambda_{w_{i}}p_{K}\lambda_{w_{i}}^{*}=p_{K}$.
Therefore condition (2) in Lemma \ref{lem:ce1} holds for $A_{\nu}=\rg(G)$ and $K_{\nu}=K$.
\end{proof}

In the following lemma,
we focus on reduced forms of elements in groups of HNN extension.
Let $G\ast_{\theta}$ be a HNN extension with respect to a discrete group $G$ and an isomorphism $\theta$ between subgroups $L_{1}$ and $L_{2}$ of $G$.
Suppose that the stable letter of $G\ast_{\theta}$ is denoted by $t$.
If $g\in G\ast_{\theta}$ has two reduced forms 
\[
c_{0}t^{\epsilon_1}c_{1}t^{\epsilon_2}c_{2}\cdots t^{\epsilon_{l}}c_{l}
\quad\text{and}\quad
d_{0}t^{\epsilon_{1}^{\prime}}d_{1}t^{\epsilon_{2}^{\prime}}d_{2}\cdots t^{\epsilon_{m}^{\prime}}d_{m}, 
\]
then we have $l=m$ and $(\epsilon_{1}, \epsilon_{2},\dots, \epsilon_{l})=(\epsilon_{1}^{\prime}, \epsilon_{2}^{\prime},\dots, \epsilon_{m}^{\prime})$.
(See Lemma 2.3 of \cite{L-S}.)
We call this $l=m$ the length of $g$.

\begin{lem}\label{lem:ce2}
There is a compact open subgroup $K$ of $\overline{\pi}_{1}$ which is contained in $\overline{G_{o(y)}}$ and satisfies the conditions of Lemma \ref{lem:PS}.
\end{lem}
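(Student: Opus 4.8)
The goal of Lemma~\ref{lem:ce2} is to produce a single compact open subgroup $K \subset \overline{G_{o(y)}}$ together with, for each finite set $\{g_1,\dots,g_l\}\subset \overline{\pi}_1\setminus K$, the pairwise disjoint sets $S_1,\dots,S_9$ and commuting elements $z_1,\dots,z_9$ required by Lemma~\ref{lem:PS}. The plan is to work with the HNN-extension structure of $\pi_1$ coming from Lemma~\ref{lem:subgraph}(a): writing $W := Y\setminus\{y\}$, we have $\pi_1 \cong \pi_1(G|_W,W)\ast_{\theta_y}$ with stable letter $g_y$, and every nontrivial element has a well-defined \emph{length} in the sense recorded just before the lemma statement. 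The strategy is to choose $K$ adapted to the base vertex group so that conjugation by suitable powers and translates of $g_y$ pushes the double cosets $Kg_jK$ into a controlled ``positive'' region of the tree $\tilde X$, giving the inclusions $z_i g_j z_i^{-1}(G\setminus S_i)\subset S_i$.

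The key steps I would carry out are as follows. \emph{Step 1:} Use Lemma~\ref{lem:faithful} and Lemma~\ref{lem:loc cpt} to identify a concrete compact open $K$. Since $\overline{G_{o(y)}}$ is a non-discrete compact open subgroup, I would take $K$ to be a finite-index open subgroup of $\overline{G_{o(y)}}$ fine enough that the generator of $\iota_y(G_y)$ and of $\iota_{\overline y}(G_y)$ interact with $K$ as in the index computations ($\kappa_y \ne \kappa_{\overline y}$) preceding Theorem~\ref{prop:GBS}; this asymmetry is exactly what makes the translation action on the tree expanding in one direction and contracting in the other. \emph{Step 2:} Given $g_1,\dots,g_l\in\overline{\pi}_1\setminus K$, bound their lengths by some $N$ and describe the horoball-type half-trees $S_i$ as preimages of vertices/edges lying at distance $\ge$ (some bound) from the base along the $g_y$-axis; concretely each $S_i$ is a union of $K$-double cosets (so $KS_i=S_i$ and $S_i$ is clopen, as Lemma~\ref{lem:PS} requires). \emph{Step 3:} Choose the nine elements $z_i$ as powers $g_y^{k_i}$ (or translates thereof) that commute with $K$ — this is where I would invoke that $K\subset\overline{G_{o(y)}}$ and that the relevant $g_y$-powers normalize the chosen edge group, so $z_i$ lies in the kernel of the modular function as needed later. \emph{Step 4:} Verify the ``ping-pong'' inclusion $z_i g_j z_i^{-1}(\overline{\pi}_1\setminus S_i)\subset S_i$ by a length/geodesic argument in $\tilde X$: conjugating a bounded-length element by a high power of the stable letter sends it deep into a half-tree, and choosing the $S_i$ pairwise disjoint (nine of them suffices, matching the constant $\tfrac{2}{3}$ in Lemma~\ref{lem:PS}) is a finite combinatorial arrangement of disjoint horoballs around the axis.

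The main obstacle I expect is Step~4, and in particular guaranteeing \emph{nine pairwise disjoint} half-trees $S_i$ that simultaneously absorb \emph{all} of $g_1,\dots,g_l$ after conjugation. The elements $g_j$ may have mixed lengths and may translate the $g_y$-axis in different directions, so a single power of $g_y$ need not push every $g_j$ into one fixed half-tree; I would need to handle the cases according to whether $g_j$ fixes a point of the axis, translates along it, or has positive length with a nontrivial ``turn,'' and the condition $\kappa_{\overline y}\ne\kappa_y$ must be used precisely to rule out the degenerate case where conjugation fails to be strictly expanding (which is exactly the discrete/non-simple situation $|n|=|m|$ in the Baumslag--Solitar picture). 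A secondary technical point is checking that the $z_i$ genuinely commute with $K$ rather than merely normalizing it, since Lemma~\ref{lem:PS} demands commutation to land in the modular kernel; this should follow from taking $z_i$ inside the centralizer of $K$ within $\overline{G_{o(y)}}$ and its $g_y$-conjugates, but it constrains how small $K$ can be chosen and must be reconciled with the expansion requirement of Step~1.
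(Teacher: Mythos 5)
There is a genuine gap, concentrated exactly where you predicted it (Steps 3--4), and your proposed fixes do not close it. First, the conjugators: powers $g_y^{k_i}$ of the stable letter do \emph{not} commute with any nontrivial compact open $K$ inside the closure of a vertex group. Writing $t:=g_y$, $\iota_y(G_y)=\langle a^n\rangle$, $\iota_{\overline y}(G_y)=\langle b^m\rangle$ with $ta^nt^{-1}=b^m$, one has $ta^Nt^{-1}=b^{mN/n}\neq a^N$ whenever the setup is non-degenerate (in the Baumslag--Solitar picture, $ta^Nt^{-1}=a^{mN/n}$), so your Step 3 fails as stated. Your fallback --- taking $z_i$ in the centralizer of $K$ inside $\overline{G_{o(y)}}$ and its conjugates --- produces only elliptic elements, which cannot by themselves push double cosets into disjoint regions. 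The paper resolves this tension by a device you never exhibit: it takes the conjugators to be \emph{products} of elliptic elements, $r_1:=at^{-1}bt$ and $r_2:=at^{-2}bt^2$, and chooses $K=\overline{\langle a^N\rangle}$ with $N$ defined precisely so that $\langle a^N\rangle=\langle a\rangle\cap t^{-2}\langle b\rangle t^2$ (and $a^N\in\langle a^n\rangle=t^{-1}\langle b^m\rangle t$); then each factor of $r_1$ and $r_2$, hence $z_j:=r_1^{j}r_2r_1^{L}$, commutes with $a^N$, while $z_j$ is still ``hyperbolic enough'' to do the pushing. The commutation requirement thus dictates the exact choice of $K$, rather than being a side condition to reconcile afterwards.

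Second, the absorption step: your worry about elements translating in different directions is not the actual difficulty, and your case analysis sketch points at the wrong mechanism. Conjugation by $r_1^{L}$ (with $L$ the maximal HNN length of the $g_j$) handles \emph{all} bounded-length elements uniformly by a normal-form cancellation argument --- \emph{except} for the exceptional family $\bigcup_{i\in\IZ}r_1^{i}\langle a^n\rangle$ of elements that nearly commute with $r_1$. These exceptional elements outside $\langle a^N\rangle$ are exactly what the second conjugator $r_2$ and the specific choice of $N$ are for: since $\langle a^N\rangle=\langle a\rangle\cap t^{-2}\langle b\rangle t^2$, an element of $\langle a\rangle\setminus\langle a^N\rangle$ fails to commute with $t^{-2}bt^2$, so $r_2(\cdot)r_2^{-1}$ restores a reduced form beginning with $at^{-2}$ and ending with $t^2a^{-1}$. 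The nine sets are then defined combinatorially by sign-pattern prefixes of reduced HNN forms (the pattern $(-1,1)$ repeated $j$ times followed by $(-1,-1)$, shifted by $r_1^{j}$), not as horoballs around a single axis; this is what makes them pairwise disjoint, $K$-invariant unions of $K$-double cosets for \emph{arbitrarily many} indices $j$ simultaneously. Without identifying the exceptional cosets and the two-conjugator mechanism, the ping-pong inclusion $z_ig_jz_i^{-1}(\overline{\pi}_1\setminus S_i)\subset S_i$ cannot be verified. A minor further point: the asymmetry $\kappa_y\neq\kappa_{\overline y}$ is used in the paper for faithfulness of the action (Lemma \ref{lem:faithful}); what this lemma's construction actually needs is $|n|,|m|\geq 2$, i.e., the properness of the edge inclusions, so your attribution of the expansion to $\kappa_y\neq\kappa_{\overline y}$ is misplaced.
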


\begin{proof}
Let $a, b$ be free generators of $G_{t(y)}$ and $G_{o(y)}$.
We suppose that $\iota_{y}(G_{y})=\langle a^{n}\rangle$, $\iota_{\overline{y}}(G_{y})=\langle b^{m}\rangle$, and $g_{y}a^{n}g_{y}^{-1}=b^{m}$ for some $n, m\in\IZ$.
We have $|n|, |m|\geq2$ by assumption.
If $\gamma$ is a geodesic path in $T$ from $o(y)$ to $t(y)$ and 
$H$ is a subgroup $G_{t(y)}\cap G_{o(y)}=\langle a^{k_{\gamma}}\rangle=\langle b^{k_{\overline{\gamma}}}\rangle$ of $\pi_{1}$,
then the subgroup $\langle a, b\rangle\subset\pi_{1}$ is isomorphic to $\langle a\rangle\ast_{H}\langle b\rangle$.
Define $N:=\frac{n^{2}k_{\overline{\gamma}}}{(n, \frac{k_{\gamma}m}{(m, k_{\overline{\gamma}})})(m, k_{\overline{\gamma}})}$ and a compact open subset $K:=\overline{\langle a^{N}\rangle}\subset\overline{\pi}_{1}$.
When we take a finite set $\{g_{1}^{\prime}, g_{2}^{\prime},\dots,g_{s}^{\prime}\}$ in $\overline{\pi}_{1}\setminus K$,
there is $g_{i}\in\pi_{1}\setminus\langle a^{N}\rangle$ with $g_{i}^{\prime}\in g_{i}K$ for any $i$,
since $\overline{\pi}_{1}\setminus{K}\subset(\pi_{1}\setminus\langle a^{N}\rangle)K$.
Let $W$ be a subgraph $Y\setminus{\{y\}}$ of $Y$.
By the isomorphism $\pi_{1}(G|_{W}, W)\ast_{\theta_y}\cong\pi_{1}$,
$\pi_{1}$ has the structure of HNN extension and $t:=g_{y}$ corresponds to the stable letter.
We have $\langle a^{N}\rangle=\langle a\rangle\cap t^{-2}\langle b\rangle t^{2}$ by definition of $N$.
Let $l_{y}\colon\pi_{1}\rightarrow\IZ_{\geq0}$ be the function such that $l_{y}(g)$ is the length of $g$ as an element of the HNN extension $\pi_{1}(G|_{W}, W)\ast_{\theta_y}$.
Suppose that $L:=\text{max}\{l_{y}(g_{i})\mid1\leq i\leq s\}$.

Set $r_{1}:=at^{-1}bt$.
For every $g\in\pi_{1}$ with $l_{y}(g)\leq L$,
We claim that if $g\notin\bigcup_{i\in\IZ}r_{1}^{i}\langle a^{n}\rangle$,
then $r_{1}^{L}gr_{1}^{-L}$ has a reduced form which begins with $at^{-1}$ and ends with $ta^{-1}$.
Suppose $g\notin\bigcup_{i\in\IZ}r_{1}^{i}\langle a^{n}\rangle$ has the reduced form $c_{0}t^{\epsilon_1}c_{1}t^{\epsilon_2}c_{2}\cdots t^{\epsilon_{l_{y}(g)}}c_{l_{y}(g)}$. 
When $(\epsilon_{1}, \epsilon_{2},\dots, \epsilon_{l_{y}(g)})\neq(-1, 1, -1, 1,\dots, -1, 1)$,
the claim is trivial.
Thus we may assume $(\epsilon_{1}, \epsilon_{2},\dots, \epsilon_{l_{y}(g)})=(-1, 1, -1, 1,\dots, -1, 1)$.
First,
we assume $l_{y}(gr_{1}^{-L})\geq l_{y}(r_{1}^{L}g)=2L+l_{y}(g)-2M$.
When $M=0$,
the words $at^{-1}btc_{0}t^{\epsilon_1}c_{1}t^{\epsilon_2}c_{2}\cdots t^{\epsilon_{l_{y}(g)}}c_{l_{y}(g)}t^{-1}b^{-1}ta^{-1}$ are reduced and the claim is true.
When $M\neq0$,
we get a reduced form of $r_{1}^{L}g$ as 
\[
 \begin{cases}
 r_{1}^{(L-\frac{M}{2})}c_{M}^{\prime}t^{-1}c_{M+1}tc_{M+2}t^{-1}\cdots tc_{l(g)}&{\rm~if~}M{\rm~is~even},\\
  r_{1}^{(L-\frac{M+1}{2})}at^{-1}c_{M}^{\prime}tc_{M+1}t^{-1}c_{M+2}t\cdots tc_{l(g)}&{\rm~if~}M{\rm~is~odd}.
 \end{cases}
\]
Here we have
\[
\begin{cases}
{c_{M}^{\prime}\in\langle a^{n}\rangle ac_{M}}{\rm~and~}{c_{M}\notin\langle a^{n}\rangle}&{\rm~if~}M{\rm~is~even},\\
{c_{M}^{\prime}\in\langle b^{m}\rangle bc_{M}}{\rm~and~}{c_{M}\notin\langle b^{m}\rangle}&{\rm~if~}M{\rm~is~odd}.
 \end{cases}
\]
This condition holds even if $M=l(g)$ because of the assumption $g\notin\bigcup_{i\in\IZ}r_{1}^{i}\langle a^{n}\rangle$.
Thus we get
\[
\begin{cases}
{c_{M}^{\prime}a^{-1}\notin\langle a^{n}\rangle}&{\rm~if~}M{\rm~is~even},\\
{c_{M}^{\prime}b^{-1}\notin\langle b^{m}\rangle}&{\rm~if~}M{\rm~is~odd}.
 \end{cases}
\]
Therefore,
we have a reduced form of $r_{1}^{L}gr_{1}^{-L}$ whose length is longer than $|l_{y}(r_{1}^{L}g)-l_{y}(r_{1}^{-L})|$.
Thus the claim is true when $l_{y}(gr_{1}^{-L})\geq l_{y}(r_{1}^{L}g)$.
Similarly, we can prove the claim when $l_{y}(gr_{1}^{-L})\leq l_{y}(r_{1}^{L}g)$.

For $j\in\IZ_{>0}$,
we define $S_{j}^{\prime}$ as the set of all $g\in\pi_{1}$ with a reduced form $g=c_{0}t^{\epsilon_1}c_{1}t^{\epsilon_2}c_{2}\cdots t^{\epsilon_{l(g)}}c_{l(g)}$ such that
\[
(\epsilon_{2i-1}, \epsilon_{2i})=(-1, 1)\quad\text{for every $i\leq j$,}\quad\text{and}\quad(\epsilon_{2j+1}, \epsilon_{2j+2})=(-1, -1).
\]
Suppose $r_{2}:=at^{-2}bt^{2}$ and $z_{j}:=r_{1}^{j}r_{2}r_{1}^{L}$.
These $\{z_{j}\}_{j}$ commute with $a^{N}$ and satisfy the following.
The inclusions
\[
z_{j}gz_{j}^{-1}(\pi_{1}\setminus{S_{j}^{\prime}})\subset{S_{j}^{\prime}}
\]
hold for every $g\in\pi_{1}\setminus\langle a^{N}\rangle$ with $l_{y}(g)\leq L$.

That is because 
$r_{2}r_{1}^{L}gr_{1}^{-L}r_{2}^{-1}$ has a reduced form which begins with $at^{-2}$ and ends with $t^{2}a^{-1}$, when $g$ satisfies either $l(g)\leq L$ and $g\notin\bigcup_{i\in\IZ}r_{1}^{i}\langle a^{n}\rangle$ or $g\in\bigcup_{i\in\IZ}r_{1}^{i}\langle a^{n}\rangle\setminus\langle a^{N}\rangle$.
Thus if $f\in\pi_{1}\setminus{S_{j}^{\prime}}$,
then $z_{j}gz_{j}^{-1}f$ has a reduced form beginning with the words $r_{1}^{j}at^{-2}$.

Since $\overline{S_{j}^{\prime}}=\overline{\langle a^{N}\rangle}S_{j}^{\prime}$,
the sets $\{S_{j}:=\overline{S_{j}^{\prime}}\}_{j}$ are pairwise disjoint subsets of $\overline{\pi}_{1}$ satisfying $KS_{j}=S_{j}$.
The subsets $S_{1}, S_{2},\dots, S_{9}$ of $\overline{\pi}_{1}$ and $z_{1}, z_{2},\dots, z_{9}\in \overline{\pi}_{1}$ satisfy conditions of Lemma \ref{lem:PS} for $\{g_{1}^{\prime}, g_{2}^{\prime},\dots,g_{s}^{\prime}\}$.
\end{proof}

In the proof of the above lemma,
both $r_{1}$ and $r_{2}$ are in the kernel of the modular function of $\overline{\pi}_{1}$,
since $a, b$ are elements in compact open subgroups of $\overline{\pi}_{1}$.
Thus $A_{\nu}:=\rg(\overline{\pi}_{1})$ and $E_{\nu}:=E_{K}$ satisfy condition (2) of Lemma \ref{lem:ce1} and every $g_{i}$ can be taken in the kernel of the modular function.

\begin{proof}[Proof of Proposition \ref{prop:GBS}]
Take $g\in\pi_{1}$.
We show that there is a compact open subgroup $K_{0}\subset\overline{G}_{t(y)}\cap\textbf{Stab}(gG_{t(y)})$ such that  $A_{\nu}:=\rg(\overline{\pi}_{1})$ and $E_{\nu}:=E_{K_{0}}$ satisfy condition (2) of Lemma \ref{lem:ce1}.
We assume that the following statements hold as in the proof of Lemma \ref{lem:ce2}.

\begin{itemize}
\item[$\circ$] The elements $a, b\in\pi_{1}$ are free generators of $G_{t(y)}$ and $G_{o(y)}$.
\item[$\circ$] The integers $n, m$ satisfy $\iota_{y}(G_{y})=\langle a^{n}\rangle$, $\iota_{\overline{y}}(G_{y})=\langle b^{m}\rangle$, and $ta^{n}t^{-1}=b^{m}$, where $t:=g_{y}$.
\item[$\circ$] The function $l_{y}\colon\pi_{1}\rightarrow\IZ_{\geq0}$ gives lengths of elements of $\pi_{1}$ with respect to the identification $\pi_{1}(G|_{W}, W)\ast_{\theta_y}\cong\pi_{1}$ of $\pi_{1}$.
\end{itemize}
 
Since $a\notin\langle b^{m}\rangle$,
there are $c, d\in\{a, e\}$ such that the equation $l_{y}(t^{\pm1}cgdt^{\pm1})=l_{y}(g)+2$ holds.
Set $r^{\prime}_{j}:=gdt^{-j}bt^{j}dg^{-1}ct^{-j}bt^{j}c$ for $j\in\IZ$ and $K_{1}:=\overline{\langle a\rangle}\cap\textbf{Stab}(r^{\prime}_{2}\langle a\rangle)$.
By assumptions for $c, d$,
if $h\in\langle a\rangle\cap\textbf{Stab}(r^{\prime}_{2}\langle a\rangle)$,
then we have
\[
g^{-1}hg\in\langle a\rangle,
\quad(t^{-2}bt^{2})^{-1}g^{-1}hg(t^{-2}bt^{2})\in\langle a\rangle,
\quad{\rm and}\quad(t^{-2}bt^{2})^{-1}h(t^{-2}bt^{2})\in\langle a\rangle.
\]
That is because $(t^{-2}bt^{2})^{-1}h(t^{-2}bt^{2})=h$ holds for any $h\in\langle a\rangle\cap\textbf{Stab}(t^{-2}bt^{2}\langle a\rangle)$.
We get $\langle a\rangle\cap\textbf{Stab}(r^{\prime}_{2}\langle a\rangle)
=\langle a\rangle\cap\textbf{Stab}(g\langle a\rangle)\cap\textbf{Stab}(t^{-2}bt^{2}\langle a\rangle)
\cap g\textbf{Stab}(t^{-2}bt^{2}\langle a\rangle)g^{-1}$.
Thus we have $K_{1}\subset\overline{\langle a\rangle}\cap\textbf{Stab}(gG_{t(y)})$ and ${r_{2}^{\prime}}^{-1}hr_{2}^{\prime}=h={r_{1}^{\prime}}^{-1}hr_{1}^{\prime}$  for any $h\in K_{1}$.

For any $f\in \pi_{1}$ with $l_{y}(f)\geq i$,
we define $\epsilon(f, i)$ as a sequence $(\epsilon_{1}, \epsilon_{2},\dots, \epsilon_{i})$ of $\{0, 1\}$,
when $f$ has a reduced form $c_{0}t^{\epsilon_1}c_{1}t^{\epsilon_2}c_{2}\dots t^{\epsilon_{l(f)}}c_{l(f)}$.
By using this definition, we set
\begin{align*}
w_{i}&:={r_{1}^{\prime}}^{-i}{r_{2}^{\prime}}^{-2},\\
U_{i}^{\prime}&:=\{f\in\pi_{1}\mid l_{y}(f)\geq il_{y}(r_{1}^{\prime})+2{\rm~and~}
\epsilon(f, il_{y}(r_{1}^{\prime})+2)=\epsilon(w_{i}, il_{y}(r_{1}^{\prime})+2)\},\\
U_{i}&:=\overline{U_{i}^{\prime}}=K_{1}U_{i}^{\prime}.\\
\end{align*}
This $\{U_{i}\}_{i=1}^{\infty}$ is an infinite family of mutually disjoint clopen subsets of $\overline{\pi}_{1}$ and we have $w_{i}hw_{i}^{-1}(\overline{\pi}_{1}\setminus{U_{i}})\subset{U_{i}}$ for any $h\in\overline{\langle a\rangle}\setminus{K_{1}}$.
In the same way as in the proof of Lemma \ref{lem:PS},
we get 
\[\left\|\frac{1}{N}\sum_{i=1}^{N}\lambda_{w_{i}}(x-E_{K_1}(x))\lambda_{w_i}^{*}\right\|\leq\frac{2}{\sqrt{N}}\left\|x-E_{K_{1}}(x)\right\|\]
for every self-adjoint element $x\in\rg(\overline{\langle a\rangle})\subset\rg(\overline{\pi_{1}})$.
Let $K\subset\overline{\langle a\rangle}$ be a compact open subgroup of Lemma \ref{lem:ce2} and $K_{0}:=K\cap K_{1}$,
then $E_{K_{0}}=E_{K_{1}}\circ E_{K}$.
For any self-adjoint element $x\in\rg(\overline{\pi}_{1})$ and $\delta>0$,
there are $N\in\IZ_{\geq0}$ and $g_{1}, g_{2},\dots, g_{M}\in\pi_{1}$
such that they satisfy
\[\left\|\frac{1}{M}\sum_{i=1}^{M}\lambda_{g_{i}}(x-E_{K}(x))\lambda_{g_i}^{*}\right\|\leq\frac{\delta}{2}\quad
{\rm and}\quad
\left\|\frac{1}{N}\sum_{i=1}^{N}\lambda_{w_{i}}(E_{K}(x)-E_{K_1}\circ E_{K}(x))\lambda_{w_i}^{*}\right\|\leq\frac{\delta}{2}.
\]
By the proof of Lemma \ref{lem:ce2},
we may assume $\{g_{i}\}_{i}$  are in the kernel of the modular function  and $\{\lambda_{g_{i}}\}_{i}$ commute with all elements of $\rg(K)$.
Thus we have
\begin{align*}
&\quad\left\|\frac{1}{NM}\sum_{i, j}\lambda_{w_{j}g_{i}}(x-E_{K_{0}}(x))\lambda_{w_{j}g_i}^{*}\right\|\\
&\leq
\left\|\frac{1}{N}\sum_{j=1}^{N}\lambda_{w_{j}}(E_{K}(x)-E_{K_1}\circ E_{K}(x))\lambda_{w_j}^{*}\right\|+
\left\|\frac{1}{N}\sum_{j=1}^{N}\lambda_{w_{j}}\left(\frac{1}{M}\sum_{i=1}^{N}\lambda_{g_{i}}(x-E_{K}(x))\lambda_{g_i}^{*}\right)\lambda_{w_j}^{*}\right\|\\
&\leq\delta.
\end{align*}
Moreover,
every $w_{j}g_{i}$ is in the kernel of the modular function and they satisfiy 
\[\frac{1}{NM}\sum_{i, j}\lambda_{w_{j}g_{i}}p_{K_0}\lambda_{w_{j}g_i}^{*}=p_{K_0}.\]
By using Lemmas \ref{lem:ce1} and \ref{lem:local basis},
we get Theorem \ref{prop:GBS}.
\end{proof}

\begin{rem}
For the group $\overline{\pi}_{1}$ in Theorem \ref{prop:GBS} and the modular function $\Delta$ of $\overline{\pi}_{1}$,
$\ker\Delta$ is an open subgroup of $\overline{\pi}_{1}$.
When the graph $Y$ is countable,
$\ker\Delta$ satisfies the conditions in Proposition \ref{prop;elementary},
and $\overline{\pi}_{1}$ is an elementary group in  Wesolek's sense \cite{W}.
That is shown as follows.
Since $\pi_{1}$ is countable,
we can take a following decreasing sequence $(K_n)_{n=1}^\infty$ of compact open subgroups in $\overline{\pi}_{1}$ by using the proof of Theorem \ref{prop:GBS}.
\begin{itemize}
\item $\bigcap_{n=1}^{\infty}K_{n}=\{e\}$.
\item For every $n$, there is a finite subset $F_{n}$ of $\pi_{1}$ with $K_{n}=\bigcap_{h\in F_{n}}\textbf{Stab}(hG_{t(y)})\cap\overline{G}_{t(y)}$.
\item For every $n$, self-adjoint element $x\in\rg(\overline{\pi}_{1})$ and $\epsilon>0$,
there are $g_{1}, g_{2},\dots,g_{m}\in\pi_{1}\cap\ker\Delta$ such that each $g_{i}$ commutes with all elements of $K_{n}$ and 
\[\norm{\frac{1}{m}\sum_{i=1}^{m}\lambda_{g_i}(x-E_{K_{n}}(x))\lambda_{g_i}^*}<\epsilon\] holds.
\end{itemize}
Define the open subset $L_{n}:=\ker\Delta\cap\{g\in\overline{\pi}_{1}\mid g^{-1}K_{n}g\leq\overline{G}_{t(y)}\}$ for every $n$,
then we can show that $L_{n}$ is the normalizer $N_{\overline{\pi}_{1}}(K_{n})$ of $K_{n}$ as follows.
Since $L_{n}$ and $N_{\overline{\pi}_{1}}(K_{n})$ are clopen subsets of $\overline{\pi}_{1}$,
it suffices to show that $L_{n}\cap\pi_{1}\subset{N_{\overline{\pi}_{1}}(K_{n})}$.
Put $K_{n}^{\prime}:=\pi_{1}\cap K_{n}$,
and take $g\in L_{n}\cap\pi_{1}$,
then we have $g^{-1}K_{n}^{\prime}g=G_{t(y)}\cap g^{-1}K_{n}g$.
Since $g\in\ker\Delta$,
we get 
\[
[G_{t(y)}:g^{-1}K_{n}^{\prime}g]=[\overline{G}_{t(y)}:g^{-1}K_{n}g]
=[\overline{G}_{t(y)}:K_{n}]=[G_{t(y)}:K_{n}^{\prime}].
\]
Thus we get $K_{n}^{\prime}=g^{-1}K_{n}^{\prime}g\leq G_{t(y)}\cong\IZ$ and $g\in N_{\overline{\pi}_{1}}(K_{n})$.
Since $\bigcap_{n=1}^{\infty}K_{n}=\{e\}$ holds,
every $g\in\ker\Delta\cap\pi_{1}$ has $n\in\IN$ with $K_{n}\leq\textbf{Stab}(gG_{t(y)})$.
This gives $\ker\Delta=\bigcup_{n=1}^{\infty}L_{n}$.
By the third condition of $(K_n)_{n=1}^\infty$,
the \Cs-algebra $\rg(L_{n}/K_{n})\cong\rg(L_{n})p_{K_{n}}\subset\rg(\overline{\pi}_{1})$ is simple for any $n$.
Therefore,
the increasing sequence $(L_n)_{n=1}^\infty$ and the decreasing sequence $(K_n)_{n=1}^\infty$ satisfy the assumptions of Proposition \ref{prop;elementary}.
Since $\overline{\pi}_{1}/\ker\Delta\cong\Delta(\overline{\pi}_{1})$ is a countable discrete group,
the locally compact group $\overline{\pi}_{1}$ is elementary in  Wesolek's sense \cite{W}.
\end{rem}

\subsection*{Acknowledgements}
The author is grateful to her supervisor  Professor Yasuyuki Kawahigashi 
for his useful comments and constant support.
She is also grateful to Professor Narutaka Ozawa for helpful comments, Professor Yuhei Suzuki for invaluable help and Yuta Michimoto for helpful discussion.

\end{document} 
\end{document}